\newcommand{\NN}{\mathbb{N}}
\newcommand{\ZZ}{\mathbb{Z}}
\newcommand{\RR}{\mathbb{R}}
\newcommand{\QQ}{\mathbb{Q}}
\renewcommand{\ll }{\langle\hspace{-.7mm}\langle }
\newcommand{\rr }{\rangle\hspace{-.7mm}\rangle }
\newcommand{\bll }{\Big\langle\hspace{-.7mm}\Big\langle }
\newcommand{\brr }{\Big\rangle\hspace{-.7mm}\Big\rangle }
\newtheorem{thm}{Theorem}[section]
\newtheorem{cor}[thm]{Corollary}
\newtheorem{lem}[thm]{Lemma}
\newtheorem{prop}[thm]{Proposition}
\newtheorem{prob}[thm]{Problem}
\theoremstyle{definition}
\newtheorem{defn}[thm]{Definition}
\theoremstyle{remark}
\newtheorem{rem}[thm]{Remark}
\newtheorem{ex}[thm]{Example}
\renewcommand{\d}{{\rm d}}
\begin{document}

\title{\vspace*{-10mm} Frattini subgroups of hyperbolic-like groups}
\date{}

\author{G. Goffer, D. Osin, E. Rybak}

\maketitle

\begin{abstract}
We study Frattini subgroups of various generalizations of hyperbolic groups. For any countable group $G$ admitting a general type action on a hyperbolic space $S$, we show that the induced action of the Frattini subgroup $\Phi(G)$ on $S$ has bounded orbits. This implies that $\Phi(G)$ is ``small" compared to $G$; in particular, $|G:\Phi(G)|=\infty$. In contrast, for any finitely generated non-cyclic group $Q$ with $\Phi(Q)=\{ 1\}$, we construct an infinite lacunary hyperbolic group $L$ such that $L/\Phi(L)\cong Q$; in particular, the Frattini subgroup of an infinite lacunary hyperbolic group can have finite index. As an application, we obtain the first examples of invariably generated, infinite, lacunary hyperbolic groups.
\end{abstract}

\section{Main results}

The \textit{Frattini subgroup} of a group $G$, denoted by $\Phi(G)$, is the intersection of all maximal subgroups of $G$; if $G$ has no maximal subgroups, $\Phi(G)=G$ by definition. Equivalently, $\Phi(G)$ can be defined as the set of all non-generating elements of $G$. Recall that an element $g\in G$ is \textit{non-generating} if it satisfies the following condition: for any generating set $X$ of $G$, $X\setminus\{ g\}$ also generates $G$.  

The study of Frattini subgroups has a long history. It is well-known that $\Phi(G)$ is nilpotent whenever it is finite. In particular, the Frattini subgroup of a finite group is nilpotent. Platonov \cite{P} and Wehrfritz \cite{W} independently extended this result to finitely generated linear groups. Finite groups that can be realized as $\Phi(G)$ for a certain group $G$ are characterized in terms of their automorphism groups by Eick in \cite{Eic}. Note, however, that the Frattini subgroup of a finitely generated group need not be finite in general. For instance, results of Pervova \cite{Per} imply that the Grigorchuk $2$-group has finite index Frattini subgroup. Furthermore, answering a question of Wiegold, Obraztsov \cite{Obr} showed that $\Phi(G)$ can be infinite and simple for a suitable finitely generated group $G$.

Our paper is motivated by the observation that the Frattini subgroup of groups with ``hyperbolic-like" geometry is often small in a suitable sense. For instance, it is not difficult to show that $\Phi(G)=\{1\}$ whenever $G$ is the free product of two non-trivial groups, as shown by Higman and Neumann \cite{HN}. Long \cite{L} proved that the Frattini subgroup of the mapping class group of a closed hyperbolic surface is isomorphic to $\mathbb Z_2$ if the genus of the surface equals $2$ and is trivial otherwise. Kapovich \cite{K} showed that $\Phi(G)$ is finite for any non-elementary subgroup $G$ of a hyperbolic group; this was generalized to all countable non-elementary convergence groups by Gelander and Glasner \cite{GG}. A generalization of all these results to the class of acylindrically hyperbolic groups was obtained by Hull \cite{H}. The ``smallness" of the Frattini subgroup was also proved for countable groups acting on trees (see \cite{All,GG} and references therein).

To state our first theorem, we need to recall a definition from \cite{Gro}; for details and unexplained terminology, we refer to the next section. 

\begin{defn}\label{def}
Let $S$ be a hyperbolic space. A subgroup $G\le \operatorname{Isom}(S)$ is said to be \textit{non-elementary} if the limit set of $G$ contains more than $2$ points. If, in addition, $G$ does not fix any point of the Gromov boundary $\partial S$, we say that $G$ has \textit{general type}. 
\end{defn}

For example, the action of a hyperbolic group $G$ on its Cayley graph with respect to a finite generating set is non-elementary if and only if $G$ is not virtually cyclic. It is also easy to show that the action of an amalgamated free product $A\ast _CB$ on the associated Bass-Serre tree is non-elementary if and only if $A\ne C \ne B$ and $C$ has index at least $3$ in at least one of the groups $A$, $B$.

We begin by proving the following.

\begin{thm}\label{main0} Let $S$ be a hyperbolic space. For any countable, general type subgroup $G$ of $\operatorname{Isom}(S)$, the action of $\Phi(G)$ on $S$ has bounded orbits. In particular, $|G:\Phi(G)|=\infty$.
\end{thm}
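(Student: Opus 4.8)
My plan is to reduce the theorem to a single dynamical statement about loxodromic elements. Since $\Phi(G)$ is characteristic, it is normal in $G$, so I may apply the classification of isometric actions on a hyperbolic space to the subgroup $\Phi(G)\le\operatorname{Isom}(S)$. I would first reduce everything to the claim that $\Phi(G)$ contains no loxodromic element. Indeed, a subgroup with no loxodromic element either has bounded orbits or is horocyclic, i.e.\ fixes a unique point $\xi\in\partial S$. In the latter case normality forces $\xi$ to be $G$-invariant: for $t\in G$ the point $t\xi$ is fixed by $t\,\Phi(G)\,t^{-1}=\Phi(G)$, hence equals the unique fixed point $\xi$, so $G$ would fix $\xi$, contradicting the hypothesis that the action of $G$ has general type. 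Thus ``no loxodromic in $\Phi(G)$'' already yields that $\Phi(G)$ has bounded orbits.

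To rule out loxodromic elements in $\Phi(G)$, I would prove the stronger statement that every loxodromic element $g\in G$ is a generating element, and hence lies outside $\Phi(G)$, the set of non-generators. Unravelling the definition, $g$ is a generating element as soon as there is a proper subgroup $H\lneq G$ with $\langle H,g\rangle=G$: then automatically $g\notin H$, the set $X=H\cup\{g\}$ generates $G$, while $X\setminus\{g\}=H$ does not, so $g$ cannot be deleted from the generating set $X$. (A routine Zorn's lemma argument then even promotes $H$ to a maximal subgroup of $G$ not containing $g$.) So the whole theorem comes down to producing, for an arbitrary loxodromic $g$, one proper subgroup $H$ that together with $g$ generates $G$.

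To build such an $H$, I would enumerate $G=\{y_1,y_2,\dots\}$ and fix a loxodromic element $w$ whose fixed point set on $\partial S$ is disjoint from that of $g$; such independent loxodromics exist because the action has general type. I would then define $H$ as an increasing union $H=\bigcup_n H_n$, where at the $n$-th step I adjoin a twisted element $y_n g^{k_n}$ for a suitably chosen exponent $k_n\in\ZZ$. Twisting by a power of $g$ is harmless for generation, since $y_n=(y_n g^{k_n})g^{-k_n}\in\langle H,g\rangle$ for every $n$ forces $\langle H,g\rangle=G$; the freedom in the choice of $k_n$ is instead what I would use to keep $H$ proper, arranging that $w\notin H_n$ at every stage, so that $w\in G\setminus H$.

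The main obstacle is precisely this last requirement: choosing the exponents $k_n$ so that the twisted generators can never combine to produce the fixed element $w$. The natural mechanism is a ping-pong argument driven by the north--south dynamics of $g$ and $w$ on the limit set, which is available for any loxodromic isometry of a hyperbolic space. The delicate point, and the reason the construction is more than a formality, is that $G$ is not assumed acylindrical and $g$ need not be a WPD element, so there is no properness of the action along the axis of $g$ to exploit; the ping-pong must be run using only the boundary dynamics of loxodromics together with the ample supply of independent loxodromic elements furnished by the general type hypothesis. Once $H$ is shown to be proper, the theorem follows: $\Phi(G)$ has bounded orbits, and the final assertion $|G:\Phi(G)|=\infty$ is immediate, since a finite-index subgroup of $G$ would have unbounded orbits exactly as $G$ does.
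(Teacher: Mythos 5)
Your first two reductions are sound: using Gromov's classification together with the normality of $\Phi(G)$ to conclude that ``no loxodromic element in $\Phi(G)$'' already forces bounded orbits is correct (this is essentially the role played by Lemma 2.7 in the paper, quoted from Osin's work on normal subgroups of general type actions), and the observation that $g\notin\Phi(G)$ follows once one exhibits a proper subgroup $H\lneq G$ with $\langle H,g\rangle=G$ is also correct. The problem is that the entire technical content of the theorem is concentrated in the step you leave unresolved, and the mechanism you propose for it cannot work as stated. Because you need $y_n=(y_ng^{k_n})g^{-k_n}\in\langle H,g\rangle$, your twisting elements are forced to lie in $\langle g\rangle$; the ``ample supply of independent loxodromics'' you invoke at the end is therefore unavailable to you, since twisting by anything outside $\langle g\rangle$ would destroy the generation property $\langle H,g\rangle=G$. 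And twisting by powers of the single element $g$ is genuinely insufficient: take $G=D_\infty\ast\mathbb{Z}$ acting on its Cayley graph (a general type action), let $g$ be the translation in $D_\infty$, $w$ the generator of the free factor $\mathbb{Z}$, and $\sigma$ the reflection, so that $\sigma g\sigma^{-1}=g^{-1}$ and $\sigma^2=1$. For every choice of exponents, the twisted generator $g^{a}\sigma g^{b}=\sigma g^{b-a}$ is an involution that swaps $g^{+}$ and $g^{-}$ on $\partial S$. No choice of exponents produces source--sink nesting for this generator, so a ping-pong ``driven by the north--south dynamics of $g$ and $w$'' breaks down: after one application of this generator the orbit lands inside the repelling neighborhood of $g$, and all control is lost. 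More generally, any $y_n$ with $y_ng^{+}=g^{-}$ defeats one-sided (or two-sided) twisting by powers of $g$ alone.

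The paper sidesteps exactly this obstruction by arguing by contradiction rather than excluding one arbitrary loxodromic at a time. Assuming $\Phi(G)$ has unbounded orbits, the dichotomy for normal subgroups makes $\Phi(G)$ itself of general type, so it contains \emph{two} independent loxodromics $f,g$. Each generator $x_i$ of $G$ is then twisted on \emph{both} sides, $b_ix_ia_i$, where $a_i,b_i$ are high powers of \emph{distinct} conjugates $f^{s}gf^{-s},f^{t}gf^{-t}\in\langle f,g\rangle\le\Phi(G)$, whose fixed points can be placed in general position --- in particular avoiding $x_i$-translates of each other's neighborhoods (condition (b), $x_i(A_i^+)\cap B_i^-=\emptyset$, in the proof of Proposition 2.5). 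This placement freedom is what makes ping-pong work against a completely arbitrary isometry $x_i$, including elements inverting $g$; and deletability of the twists from the generating set comes for free, because they are non-generators, lying in $\Phi(G)$. Your single-loxodromic setup has neither the placement freedom nor (without further argument) the deletability of auxiliary twisting elements, so as written the construction of the proper subgroup $H$ --- the heart of the proof --- is missing.
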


It is not difficult to show that Theorem \ref{main0} implies the results of \cite{GG,H,K,L} mentioned above (see Corollary \ref{corPC}) as well as the main result of \cite{All} in the particular case of countable groups. In fact, our theorem can be derived from \cite[Theorem 3.3]{GG} and some general facts about groups acting on hyperbolic spaces. However, we choose to provide an independent proof, which is shorter and more elementary than that of \cite[Theorem 3.3]{GG}. It is also worth noting that the conclusion of Theorem \ref{main0} may not hold for non-elementary actions as demonstrated in Example \ref{ex}.

Further, we consider lacunary hyperbolic groups introduced in \cite{OOS}. Recall that a group $G$ is \emph{lacunary hyperbolic} if it is finitely generated and at least one asymptotic cone of $G$ is an $\RR$-tree. In conjunction with finite presentability, the latter condition is known to imply hyperbolicity of $G$. Thus, the class of lacunary hyperbolic groups can be thought of as a generalization of the class of hyperbolic groups. It is shown in \cite{OOS} that these two classes have similar algebraic properties. However, this similarity does not extend to the Frattini subgroup. 

\begin{thm}\label{main1}
Suppose that a finitely generated group $Q$ is not finite cyclic and has a trivial Frattini subgroup. Then there exists an infinite lacunary hyperbolic group $L$ such that $L/\Phi(L)\cong Q$. 
\end{thm}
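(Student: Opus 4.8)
The plan is to construct $L$ together with an epimorphism $\pi\colon L\to Q$ and to prove that $\Phi(L)=\ker\pi=:N$. One inclusion is automatic. Since $Q$ is finitely generated and nontrivial it has maximal subgroups, and by hypothesis the intersection of all of them is $\Phi(Q)=\{1\}$. The preimage under $\pi$ of a maximal subgroup of $Q$ is a maximal subgroup of $L$ containing $N$, and the intersection of these preimages is $\pi^{-1}(\Phi(Q))=N$; hence $\Phi(L)\subseteq N$. For the reverse inclusion it suffices to secure the property
\[
(\star)\qquad\text{no proper subgroup of $L$ surjects onto $Q$,}
\]
since $(\star)$ says exactly that every maximal subgroup of $L$ contains $N$, i.e. $N\subseteq\Phi(L)$. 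In turn, $(\star)$ follows from the concrete condition that, for one fixed finite generating set $\{\bar s_1,\dots,\bar s_k\}$ of $Q$, every lift $\{s_1a_1,\dots,s_ka_k\}$ with $a_i\in N$ generates $L$: given $H\le L$ with $\pi(H)=Q$, choosing $h_i\in H$ with $\pi(h_i)=\bar s_i$ exhibits such a lift inside $H$, forcing $H\supseteq\langle h_1,\dots,h_k\rangle=L$.

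Since $Q$ is not finite cyclic we have $d(Q)=k\ge 2$ (the free cases, including $Q\cong\ZZ$, are handled by $L=Q$, using $\Phi(F_k)=\{1\}$ and hyperbolicity of $F_k$). Assume then $Q=F_k/R$ with $R\ne\{1\}$, and take the free generators $x_1,\dots,x_k$ of $F_k$ as the lifts $s_i$. I would construct $L$ as a direct limit $F_k=G_0\twoheadrightarrow G_1\twoheadrightarrow\cdots$ of non-elementary hyperbolic groups, where $G_{i+1}$ is obtained from $G_i$ by adjoining finitely many new relators, all chosen inside $R$. Imposing only relators from $R$ yields compatible epimorphisms $G_i\twoheadrightarrow Q$, hence an epimorphism $\pi\colon L=\varinjlim G_i\twoheadrightarrow Q$; choosing the relators so as never to exhaust $R$ keeps $N=\ker\pi$ nontrivial. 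Via Olshanskii's graded small cancellation theory over hyperbolic groups, as developed in \cite{OOS}, each $G_i$ is hyperbolic, and if the stage-$i$ relators are long enough relative to the hyperbolicity constant and injectivity radius of $G_i$, the limit $L$ is lacunary hyperbolic.

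To force the lifting condition I diagonalize. Enumerate all word-tuples $T=(u_1,\dots,u_k)$ with $u_i\in R$, which parametrize all lifts $(x_1u_1,\dots,x_ku_k)$ of the generating set, and enumerate a generating set $\{g_1,g_2,\dots\}$ of $N$ arising from a free basis of $R$. At the stage assigned to a pair $(T,g_j)$ I adjoin the single relator $r=g_j^{-1}\,w(x_1u_1,\dots,x_ku_k)$, where $w\in R$ is a very long, suitably generic element; since $w\in R$ we have $w(x_1u_1,\dots,x_ku_k)\in N$, so $r\in R$ and the resulting relation $g_j=w(x_1u_1,\dots,x_ku_k)$ is consistent with $\pi$. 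This relation persists in all later quotients and in $L$, placing $g_j$ inside the subgroup generated by the lift indexed by $T$. Carrying this out for every pair, each lift of the generating set generates a subgroup containing all of $N$ and surjecting onto $Q$, hence equal to $L$; this is precisely the condition of the first paragraph, so $\Phi(L)=N$ and $L/\Phi(L)\cong Q$.

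The hard part will be reconciling three competing demands at every stage. The adjoined relator must (i) lie in $R$, to preserve the map onto $Q$; (ii) satisfy the graded small cancellation condition over $G_i$, with length growing fast enough for lacunary hyperbolicity; and (iii) not collapse the group, so that $L$ stays infinite, the delicate case being $Q$ finite, where $N$ must remain infinite even while it is repeatedly quotiented. The decisive lemma to be established is thus that the nontrivial normal subgroup $R\trianglelefteq F_k$ contains, over each hyperbolic $G_i$ and for each prescribed short prefix $g_j^{-1}$, arbitrarily long words $w$ for which $r=g_j^{-1}w(\dots)$ meets the small cancellation condition relative to the finitely many previously imposed relators. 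A genericity argument inside the infinite-rank free group $R$, combined with the standard bookkeeping of \cite{OOS} forcing the injectivity radii to infinity, should produce such $w$ and simultaneously guarantee that the limit is infinite and lacunary hyperbolic.
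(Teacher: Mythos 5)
Your first paragraph is correct and is in fact a clean streamlining of the paper's endgame: $\Phi(L)\subseteq N$ by pulling back maximal subgroups of $Q$, and, since $N\trianglelefteq L$, the lifting condition $(\star)$ gives $N\subseteq\Phi(L)$. The overall skeleton (a direct limit of torsion-free hyperbolic groups, diagonalizing over all lifts of a generating set and forcing the kernel into each lift's subgroup by small-cancellation relations) is also the paper's strategy. But the whole proof lives in your deferred ``decisive lemma,'' and there are two genuine problems with it.

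First, the lemma is false under your case division. You set aside only free $Q$; the paper sets aside every $Q$ that is infinite and lacunary hyperbolic (taking $L=Q$ there), and this is forced. Write $\psi_T\colon x_i\mapsto x_iu_i$ and note that, since $\pi\circ\psi_T=\pi$, one has $\psi_T(R)=\psi_T(F_k)\cap R$, and the same computation in $G_i$ shows that the set of elements available as right-hand sides, $\{\gamma_i(\psi_T(w)) \mid w\in R\}$, is exactly $H_T=\langle \gamma_i(x_1u_1),\dots,\gamma_i(x_ku_k)\rangle\cap N_i$. If $Q$ is, say, a closed hyperbolic surface group (non-free, with trivial Frattini subgroup), nothing prevents a lift tuple from generating a retract copy of $Q$ inside $G_i$ with $H_T=\{1\}$; then every relator $g_j^{-1}\psi_T(w)$ with $w\in R$ represents $\gamma_i(g_j)^{-1}$ in $G_i$, so imposing it simply kills $g_j$, which is incompatible with any injectivity-radius requirement. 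No choice of ``long, generic $w$'' can help, because the obstruction is in $G_i$, not in $F_k$.

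Second, even in the legitimate cases ($Q$ finite, or infinite but not lacunary hyperbolic), genericity of $w$ inside the abstract free group $R$ is the wrong notion: the quotient map $F_k\to G_i$ can shorten or kill elements arbitrarily, so length in $F_k$ says nothing about small cancellation over $G_i$. What Olshanskii's theorem (the paper's Theorem \ref{Ols}) actually requires is that the subgroup $H_T\le G_i$ from which the correcting letters are drawn be \emph{non-elementary}; the theorem then supplies the element $h=\gamma_i(\psi_T(w))$ for you. Proving non-elementarity of $H_T$ is precisely the missing content, and it needs the paper's two auxiliary lemmas: $H_T\neq\{1\}$, since otherwise $Q$ is a retract of $G_i$, hence isometrically embedded, quasiconvex, and therefore hyperbolic, contradicting ``finite and torsion-free'' or ``not lacunary hyperbolic'' (this is the finite-stage version of the paper's use of Lemma \ref{lem:retr}); and then non-trivial plus normal in the non-cyclic subgroup $\langle\gamma_i(\mathrm{lift})\rangle$ forces $H_T$ non-cyclic by Lemma \ref{lem:no cyclic normal subgroups}. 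Two smaller points: your safeguard ``choosing the relators so as never to exhaust $R$ keeps $N$ nontrivial'' does not work as stated, since the normal closure of a few relators can swallow all of $R$ (in the good cases non-triviality of $N$ instead comes for free, because $L$ is infinite lacunary hyperbolic while $Q$ is not); and the small-cancellation machinery you invoke is Olshanskii's \cite{Ols}, not \cite{OOS}. Note also that the paper sidesteps your infinitely generated kernel by adjoining two fresh generators $a,b$, taking $N=\langle a,b\rangle$ and killing each $R_i$ only modulo $N$; your choice $N=\gamma(R)$ is a workable alternative, but only after the lemma is repaired along the above lines.
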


For a group $G$, we call $G/\Phi(G)$ the \textit{Frattini quotient} of $G$. Using the definition of $\Phi(G)$ as the set of non-generating elements, it is easy to show that if $G$ is finitely generated, then $G/\Phi(G)$ cannot be finite cyclic unless $G$ is itself finite cyclic. It is also clear that $\Phi(G/\Phi(G))=\{ 1\}$. Therefore, the assumptions about $Q$ in Theorem \ref{main1} cannot be relaxed.

\begin{rem}
The proof of Theorem \ref{main1} employs results obtained via small cancellation theory in hyperbolic groups outlined by Gromov \cite{Gro} and elaborated by Olshanskii \cite{Ols}. By using small cancellation theory in relatively hyperbolic groups instead (see \cite{Osi10}), we could additionally ensure that $\Phi(L)$ contains any given lacunary hyperbolic group as a subgroup. Furthermore, utilizing small cancellation theory in acylindrically hyperbolic groups developed in \cite{H}, one can prove the following: \textit{For any finitely generated, non-cyclic group $Q$ with trivial Frattini subgroup and any countable group $H$, there exists an infinite, finitely generated group $G$ such that $G/\Phi(G)\cong Q$ and $H\le \Phi(G)$.} The proofs of these results are more technical and we restrict our attention to Theorem~\ref{main1} in this paper.
\end{rem}

Taking $Q$ to be a non-cyclic abelian group of order $n$ we obtain the following corollary, which sharply contrasts Theorem \ref{main0}. 

\begin{cor}\label{cor}
For every integer $n$ divisible by a square of a prime, there exists an infinite lacunary hyperbolic group $L$ such that $|L:\Phi(L)|=n$ and all maximal subgroups of $L$ are normal of finite index.
\end{cor}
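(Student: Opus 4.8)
The plan is to deduce the corollary from Theorem~\ref{main1} by exhibiting a suitable finite group $Q$. Since Theorem~\ref{main1} produces an infinite lacunary hyperbolic $L$ with $L/\Phi(L)\cong Q$, and then $|L:\Phi(L)|=|Q|$, it suffices to find a finite, non-cyclic group $Q$ of order $n$ with $\Phi(Q)=\{1\}$ whose maximal subgroups are all normal of finite index; the latter property will transfer to $L$ via the correspondence between maximal subgroups of $L$ and of $Q$. The natural candidate is the abelian group
\[
Q=\bigoplus_{p\mid n}(\ZZ/p\ZZ)^{v_p(n)},
\]
where $v_p(n)$ denotes the $p$-adic valuation of $n$; by construction $|Q|=n$.

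First I would verify that $Q$ satisfies the hypotheses of Theorem~\ref{main1}. For a finite abelian group the Frattini subgroup is the direct product of the Frattini subgroups of its Sylow subgroups, and for an abelian $p$-group $P$ one has $\Phi(P)=P^p$. Since each Sylow subgroup $Q_p$ of $Q$ is elementary abelian, $Q_p^{\,p}=\{1\}$ for every $p$, whence $\Phi(Q)=\{1\}$. Moreover $Q$ is cyclic if and only if every exponent $v_p(n)$ equals $1$, i.e. iff $n$ is squarefree; as $n$ is divisible by the square of a prime, $Q$ is non-cyclic. Thus Theorem~\ref{main1} applies and yields an infinite lacunary hyperbolic group $L$ with $L/\Phi(L)\cong Q$ and $|L:\Phi(L)|=n$.

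It remains to control the maximal subgroups of $L$. Since $|Q|=n>1$, the group $L$ has at least one maximal subgroup, and by definition $\Phi(L)$ is contained in every maximal subgroup of $L$. Hence the correspondence theorem identifies the maximal subgroups of $L$ with those of $L/\Phi(L)\cong Q$: each maximal $M\le L$ is the full preimage of a maximal subgroup $\overline M\le Q$ under the quotient map $L\to Q$. Because $Q$ is finite and abelian, every maximal subgroup $\overline M$ is normal of prime (hence finite) index in $Q$; pulling back, its preimage $M$ is normal in $L$ with $|L:M|=|Q:\overline M|<\infty$. This proves that all maximal subgroups of $L$ are normal of finite index.

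The essential content of the statement is supplied entirely by Theorem~\ref{main1}; the only genuine verification here is the Frattini computation for finite abelian groups, together with the observation that $n$ being non-squarefree is exactly what forces the (unique) Frattini-trivial abelian group of order $n$ to be non-cyclic. I do not anticipate a serious obstacle, since both the Frattini formula for finite abelian groups and the maximal-subgroup correspondence modulo $\Phi(L)$ are standard.
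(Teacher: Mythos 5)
Your proposal is correct and matches the paper's own argument: the paper likewise takes $Q=\bigoplus_{i=1}^k(\ZZ/p_i\ZZ)^{\alpha_i}$ (the same group as your $\bigoplus_{p\mid n}(\ZZ/p\ZZ)^{v_p(n)}$), notes it is non-cyclic with trivial Frattini subgroup precisely because $n$ is not squarefree, and applies Theorem~\ref{main1}. Your explicit verification of $\Phi(Q)=\{1\}$ and the correspondence argument for maximal subgroups (every maximal subgroup of $L$ contains $\Phi(L)$, so it is the preimage of a maximal, hence normal finite-index, subgroup of the finite abelian group $Q$) just spells out details the paper leaves implicit.
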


Note that we need $n$ to be divisible by a square of a prime to ensure that there exists a non-cyclic abelian group of order $n$. More precisely, if $n=p_1^{\alpha_1}p_2^{\alpha_2} \cdots p_k^{\alpha_k}$, where $p_1, p_2, \ldots, p_k$ are primes, $\alpha _1, \alpha_2, \ldots, \alpha_k\in \NN$, and $\alpha _i\ge 2$ for at least one $i$, then the group $Q=\bigoplus_{i=1}^k (\ZZ/p_i\ZZ)^{\alpha_i}$ is non-cyclic and has trivial Frattini subgroup, so Theorem \ref{main1} applies. 

As another application, we obtain the first examples of infinite, invariably generated, lacunary hyperbolic groups. Recall that a subset $S$ of a group $G$ is an \emph{invariable generating set} of $G$ if for every function $f\colon S\to G$, the group $G$ is generated by the subset $\{f(s)^{-1}sf(s) \mid s\in S\}$. A group $G$ is \textit{invariably generated} if it admits an invariable generating set (equivalently if $S=G$ is an invariable generating set of $G$); if $G$ has a finite invariable generating set, it is said to be \textit{finitely invariably generated}. This terminology was coined by Dixon \cite{D} in 1988, although equivalent properties of groups have been studied since the 19th century.

Every finite group is invariably generated by a result of Jordan \cite{J}. For infinite groups, the phenomenon of invariable generation typically occurs in non-hyperbolic settings. For example, results of Wiegold \cite{Wie} imply that every virtually solvable group is invariable generated; furthermore, a linear group is finitely invariably generated if and only if it is finitely generated and virtually solvable, as shown by Kantor, Lubotzky, and Shalev \cite{KLS}. On the other hand, non-elementary hyperbolic groups and, more generally, non-elementary convergence groups are not invariably generated \cite{G}.  

The property of invariable generation is closely related to Frattini quotients. For instance, if $G$ is finitely generated and $G/\Phi(G)$ is finite, then $G$ is finitely invariably generated (see \cite[Lemma 2.6]{KLS}). In the recent paper \cite{CT}, Cox and Thillaisundaram showed that every generating set of a finitely generated group $G$ is an invariable generating set if and only if all maximal subgroups of $G$ are normal; in particular, both properties hold whenever $G/\Phi(G)$ is abelian. Combining the latter result with Theorem \ref{main1}, we obtain the following.

\begin{cor}
    There exists an infinite lacunary hyperbolic group $L$ such that every generating set of $L$ is an invariable generating set. In particular, $L$ is finitely invariably generated.
\end{cor}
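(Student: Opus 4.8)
The plan is to combine Theorem \ref{main1} with the criterion of Cox and Thillaisundaram quoted above. The key observation is that the condition ``$G/\Phi(G)$ is abelian'' is exactly what the cited result of \cite{CT} requires to conclude that every generating set is an invariable generating set, while Theorem \ref{main1} lets us realize a suitable abelian group as the Frattini quotient of an infinite lacunary hyperbolic group.

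First I would choose a concrete target group $Q$. Theorem \ref{main1} demands that $Q$ be finitely generated, not finite cyclic, and have trivial Frattini subgroup; I additionally want $Q$ to be abelian so as to invoke the conclusion of \cite{CT}. The simplest choice is a finite elementary abelian group, say $Q=(\ZZ/p\ZZ)^2$ for a prime $p$: it is finite (hence finitely generated), non-cyclic, abelian, and has trivial Frattini subgroup. (Alternatively, one may take $Q=\ZZ^2$, whose Frattini subgroup is trivial since $\bigcap_p p\ZZ^2=\{0\}$, or simply invoke Corollary \ref{cor} with $n=p^2$, which already records that all maximal subgroups of the resulting group are normal.)

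Next I would apply Theorem \ref{main1} to this $Q$ to obtain an infinite lacunary hyperbolic group $L$ with $L/\Phi(L)\cong Q$. Since $Q$ is abelian, the result of \cite{CT} cited above immediately yields that every generating set of $L$ is an invariable generating set. Finally, because $L$ is lacunary hyperbolic and therefore finitely generated, any finite generating set of $L$ is an invariable generating set, so $L$ possesses a finite invariable generating set; this gives the ``in particular'' clause that $L$ is finitely invariably generated.

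I do not expect a genuine obstacle here: the statement is a formal consequence of Theorem \ref{main1} together with the external criterion of \cite{CT}, and all the substantive content is already packaged inside the construction underlying Theorem \ref{main1}. The only points needing (routine) verification are the existence of a suitable abelian $Q$ and the elementary remark that finite generation of $L$ promotes ``every generating set is invariable'' to ``finitely invariably generated.''
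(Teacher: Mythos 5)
Your proposal is correct and follows exactly the paper's route: the paper also obtains this corollary by applying Theorem \ref{main1} to a non-cyclic abelian $Q$ with trivial Frattini subgroup and then invoking the Cox--Thillaisundaram criterion (every generating set is invariable when $G/\Phi(G)$ is abelian), with finite generation of the lacunary hyperbolic group $L$ giving the ``finitely invariably generated'' clause.
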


\paragraph{Acknowledgments.}  D. Osin was supported by the NSF grant DMS-1853989. We would also like to thank the anonymous referee for the careful reading of our manuscript and useful remarks.


\section{Groups acting on hyperbolic spaces}


We begin by briefly reviewing the standard background on groups acting on hyperbolic spaces. Our main references are \cite{book,Gro,Ham}. We stress that we do not assume our hyperbolic spaces to be proper. All group actions on metric spaces considered in this paper are assumed to be isometric by default.

Throughout this section, let $(S, d)$ be a hyperbolic metric space. Recall that the \textit{Gromov product} of two points $x, y \in S$ with respect to a point $z \in S$ is defined by
$$
(x, y)_z=\frac{1}{2}(\mathrm{~d}(x, z)+\mathrm{d}(y, z)-\mathrm{d}(x, y)).
$$
A sequence $\left(x_i\right)$ of elements of $S$ \textit{converges at infinity} if $\left(x_i, x_j\right)_s \rightarrow \infty$ as $i, j \rightarrow \infty$ for some, or equivalently for any point $s\in S$. Two such sequences $\left(x_i\right)$ and $\left(y_i\right)$ are \textit{equivalent} if $\left(x_i, y_j\right)_s \rightarrow \infty$ as $i, j \rightarrow \infty$. The Gromov boundary of $S$, denoted by $\partial S$, is defined as the set of equivalence classes of sequences converging at infinity. If $x$ is the equivalence class of $\left(x_i\right)$, we say that the sequence $(x_i)$ converges to $x$. This naturally defines a topology on $\widehat{S}=S \cup \partial S$ extending the topology on $S$ such that $S$ is dense in $\widehat{S}$. It is not difficult to show that the definition of $\partial S$ is independent of the choice of the basepoint and any isometric group action on $S$ extends to a continuous action on $\widehat S$ (see \cite[Chapter 3]{book}).

For a group $G$ acting on $S$ and any $s\in S$, let $Gs=\{ gs\mid g\in G\}$ denote the $G$-orbit of the point $s$. The \textit{limit set} $\Lambda(G)$ of $G$ is defined to be the intersection of the closure of $Gs$ in $\widehat S$ with $\partial S$. It is easy to see that this definition is independent of the choice of $s\in S$. 

An element $g \in G$ is said to be {\it loxodromic} if the group $\langle g\rangle $ has unbounded orbits and fixes exactly $2$ points on $\partial S$ (equivalently, $|\Lambda(\langle g\rangle)|=2$). The following lemma establishes the standard notation for fixed points of loxodromic elements motivated by the so-called north-south dynamics (see \cite[Lemma 8.1.G]{Gro} or \cite[Theorem 6.1.10]{book}).

\begin{lem} \label{north-south}
The fixed points of a loxodromic element $g \in G$ can be denoted by $g^+$ and $g^-$ so that
$$
\lim _{n \rightarrow \infty} g^n s=g^{+} \;\;\;  \forall s \in \widehat S \setminus \left\{g^{-}\right\}\;\;\;\;\; {\rm and }\;\;\;\;\; \lim _{n \rightarrow \infty} g^{-n} s=g^{-} \;\;\;  \forall s \in \widehat S \setminus \left\{g^{+}\right\}
$$
where the convergence in the first (respectively, second) limit is uniform on sets whose closure does not contain $g^{-}$ (respectively, $g^+$).
\end{lem}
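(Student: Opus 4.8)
The plan is to prove the statement using only the four-point formulation of $\delta$-hyperbolicity together with the description of the topology on $\widehat S$ via the Gromov product, so that no properness of $S$ is needed. Fix a basepoint $o\in S$, let $\delta$ be a hyperbolicity constant of $S$, and write $f(n)=d(o,g^n o)$. First I would record the two consequences of $g$ being loxodromic that the argument actually uses: the forward half-orbit $(g^n o)_{n\ge 0}$ and the backward half-orbit $(g^{-n}o)_{n\ge 0}$ converge to two \emph{distinct} boundary points, which are precisely the two fixed points of $g$; I label them $g^+$ and $g^-$ respectively. In particular $f(n)\to\infty$, and since $d(o,g^{-n}o)=d(g^n o,o)=f(n)$, the backward distances diverge as well. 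These are the standard consequences of positivity of the stable translation length of a loxodromic isometry, equivalently of the orbit map $n\mapsto g^n o$ being a quasi-isometric embedding of $\ZZ$.

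The computational heart is the isometry-equivariance identity
$$(g^n s, g^+)_o=(s,g^+)_{g^{-n}o},$$
valid for every $s\in\widehat S$, which follows by applying the isometry $g^{-n}$ and using $g^{-n}g^+=g^+$. Thus to prove $g^n s\to g^+$ it suffices to show that the right-hand side tends to $\infty$ whenever $s\ne g^-$. I would estimate it with the four-point inequality based at $p:=g^{-n}o$, taking $o$ as the intermediate point:
$$(s,g^+)_{p}\ \ge\ \min\{(s,o)_{p},\,(o,g^+)_{p}\}-\delta.$$
For the second term, applying the isometry $g^n$ gives $(o,g^+)_{g^{-n}o}=(g^n o,g^+)_o$, which tends to $\infty$ because $g^n o\to g^+$. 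For the first term I would use the exact identity $(s,o)_{p}+(s,p)_o=d(o,p)=f(n)$ together with an upper bound coming from $s\ne g^-$: since $g^{-n}o\to g^-$, by $\delta$-hyperbolicity the two smallest of the three Gromov products $(s,g^-)_o$, $(s,g^{-n}o)_o$, $(g^-,g^{-n}o)_o$ differ by at most $\delta$, and as the last one diverges we obtain $(s,g^{-n}o)_o\le (s,g^-)_o+\delta=:M$ for all large $n$. Hence $(s,o)_{p}=f(n)-(s,g^{-n}o)_o\ge f(n)-M\to\infty$, so the minimum above diverges and $g^n s\to g^+$. The backward statement follows by replacing $g$ with $g^{-1}$, which interchanges $g^+$ and $g^-$.

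Finally, for uniformity on a set $A\subseteq\widehat S$ with $g^-\notin\overline A$, the point is that every bound above is uniform in $s\in A$. Indeed $g^-\notin\overline A$ forces $M_0:=\sup_{s\in A}(s,g^-)_o<\infty$, since otherwise a sequence in $A$ would converge to $g^-$; then $(s,g^{-n}o)_o\le M_0+\delta$ holds simultaneously for all $s\in A$ once $n$ is large, while the estimate on $(o,g^+)_{p}$ does not involve $s$ at all. Therefore $\inf_{s\in A}(g^n s,g^+)_o\to\infty$, which is exactly uniform convergence to $g^+$ on sets whose closure avoids $g^-$; the other case is symmetric. \emph{The main obstacle} is the first paragraph: justifying without properness that the two half-orbits converge to distinct boundary points (equivalently that the orbit is a genuine quasigeodesic with positive stable translation length), together with the routine but necessary bookkeeping of the $O(\delta)$ ambiguities incurred when extending the Gromov product to $\partial S$, which must be absorbed into the additive constants throughout.
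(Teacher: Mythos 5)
The paper does not prove Lemma \ref{north-south} at all: it is quoted as a known result, with pointers to \cite[Lemma 8.1.G]{Gro} and \cite[Theorem 6.1.10]{book}. So there is no internal proof to compare against, and your proposal must be judged on its own. The second half of your argument is correct and is the standard Gromov-product route: the equivariance identity $(g^n s,g^+)_o=(s,g^+)_{g^{-n}o}$, the four-point estimate at $p=g^{-n}o$, the identity $(s,o)_p+(s,p)_o=d(o,p)$ (exact for $s\in S$, up to $O(\delta)$ for boundary points), the bound $(s,g^{-n}o)_o\le (s,g^-)_o+\delta$ for large $n$, and the observation that $\sup_{s\in A}(s,g^-)_o<\infty$ whenever $g^-\notin\overline A$, which is exactly what upgrades pointwise to uniform convergence. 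This correctly reduces the entire lemma, uniformity included, to the single special case $s=o$, i.e.\ to the convergence of the two half-orbits.

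However, the step you flag as ``the main obstacle'' is a genuine gap, not routine bookkeeping, and it is precisely where the cited references do their real work. The paper's definition of loxodromic is: $\langle g\rangle$ has unbounded orbits and fixes exactly two points of $\partial S$ (equivalently, $|\Lambda(\langle g\rangle)|=2$). Positivity of the stable translation length, equivalently quasi-isometric embeddedness of the orbit map $n\mapsto g^n o$, is \emph{not} part of this definition; it is an equivalent characterization whose equivalence with the paper's definition is itself a theorem (the elliptic/parabolic/loxodromic classification of isometries of possibly non-proper hyperbolic spaces), so invoking it as a ``standard consequence'' of loxodromy is circular in this setting. Concretely, starting only from the paper's definition you must still rule out that the forward half-orbit $(g^n o)_{n\ge 0}$ accumulates on \emph{both} points of $\Lambda(\langle g\rangle)$, and that it contains unbounded subsequences with no limit in $\partial S$ at all --- a real possibility for unbounded sequences in non-proper spaces, where $\widehat S$ is not compact. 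Until that special case $s=o$ is established, your proposal is a correct second half of a proof resting on an unproven first half whose difficulty is comparable to that of the lemma itself.
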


\begin{rem} \label{congugation}
It is easy to see that if $g$ is loxodromic, then so is $fgf^{-1}$ for any $f\in G$. Moreover, we have $(fgf^{-1})^{+}=fg^+$, and  $(fgf^{-1})^{-}=fg^-$.
\end{rem}

Recall that loxodromic elements $g_1,g_2 \in G$ are {\it independent} if $\left\{g_1^{+}, g_1^{-}\right\} \cap \left\{g_2^{+}, g_2^{-}\right\} =\emptyset$. If $G\le Isom(S)$ is of general type, we can find infinitely many independent loxodromic elements in $G$.  Conversely, the existence of two independent loxodromic elements in $G$ implies that the action of $G$ on $S$ is of general type (see \cite[Section 2]{Ham} for details).

The following proposition is the main technical result of this section. It seems to be of independent interest. 

\begin{prop} \label{main_prop}
Let $S$ be a hyperbolic space. For any countable set of isometries $x_1, x_2, ...$ of $S$ and any general type subgroup $H\le \operatorname{Isom}(S)$, there exist $a_1, b_1, a_2, b_2, ... \in H$ such that the elements $b_1x_1a_1, b_2x_2a_2, ...$ form a basis of a free subgroup $F \le \operatorname{Isom}(S)$ that does not contain $H$. 
\end{prop}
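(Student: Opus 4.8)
The plan is to realize each product $b_i x_i a_i$ as a loxodromic isometry with very strong north--south dynamics and then to run the ping-pong (table-tennis) lemma so that these elements freely generate $F$. The elements $a_i,b_i\in H$ will be high powers of loxodromic elements of $H$; their role is to absorb the arbitrary isometry $x_i$ and turn $b_ix_ia_i$ into a Schottky generator. To guarantee that $F$ does not contain $H$, I would reserve at the outset a single loxodromic element $h_0\in H$ whose fixed points $h_0^{\pm}$ are kept away from all the ping-pong regions; then $h_0^{\pm}\in\Lambda(H)\setminus\Lambda(F)$, so $\Lambda(H)\not\subseteq\Lambda(F)$, and hence $H\not\le F$ by monotonicity of limit sets under inclusion.

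For a single index, fix $x=x_i$. Since $H$ has general type, I choose two independent loxodromic elements $f,f'\in H$ whose four fixed points are distinct and which in addition satisfy the genericity condition $x(f^{+})\ne (f')^{-}$; this excludes only one value of $(f')^{-}$, and by the remark preceding Proposition~\ref{main_prop} there are infinitely many independent loxodromics in $H$, so such a choice is possible. Put $a=f^{\,n}$, $b=(f')^{m}$, and $g=bxa=(f')^{m}xf^{\,n}$. Given neighborhoods $A\ni (f')^{+}$ and $B\ni f^{-}$ with disjoint closures, Lemma~\ref{north-south} shows that for large $n$ the map $f^{\,n}$ sends $\widehat S\setminus B$ into a tiny neighborhood of $f^{+}$; the homeomorphism $x$ of $\widehat S$ carries this into a tiny neighborhood of $x(f^{+})$, which is separated from $(f')^{-}$; and for large $m$ the map $(f')^{m}$ then sends it into $A$. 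Thus $g(\widehat S\setminus B)\subseteq A$ for $n,m$ large. Applying the same computation to $g^{-1}=f^{-n}x^{-1}(f')^{-m}$, using the genericity condition in the equivalent form $x^{-1}((f')^{-})\ne f^{+}$, yields $g^{-1}(\widehat S\setminus A)\subseteq B$. Since $A\cap B=\emptyset$ we get $g(\overline A)\subseteq A$ and $g^{-1}(\overline B)\subseteq B$, so $g$ has north--south dynamics with $g^{+}\in A$, $g^{-}\in B$; in particular $\langle g\rangle$ has unbounded orbit and $g$ is loxodromic.

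I would then carry this out simultaneously for all $i$. As $\Lambda(H)$ is infinite (indeed perfect, by the existence of two independent loxodromics), I first fix the reserved points $h_0^{\pm}$ and a neighborhood $N$ of $\{h_0^{\pm}\}$, and then, for each $i$, choose independent loxodromics $f_i,f_i'\in H$ so that the points $\{f_i^{-},(f_i')^{+}\}_i$ are pairwise distinct, all lie in $\Lambda(H)\setminus N$, and satisfy $x_i(f_i^{+})\ne(f_i')^{-}$. Working with Gromov-product neighborhoods centered at a fixed basepoint $s_0\in S$, I select pairwise disjoint sets $A_i\ni (f_i')^{+}$ and $B_i\ni f_i^{-}$, all disjoint from $N$ and from $s_0$, and finally pick the exponents $n_i,m_i$ so large that $g_i(\widehat S\setminus B_i)\subseteq A_i$ and $g_i^{-1}(\widehat S\setminus A_i)\subseteq B_i$ for $g_i=b_ix_ia_i=(f_i')^{m_i}x_if_i^{\,n_i}$. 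Because $A_i\subseteq\widehat S\setminus B_i$ these inclusions propagate to all powers, so the table-tennis lemma applies and shows that $\{g_i\}$ is a free basis of $F\le\operatorname{Isom}(S)$. Moreover $\Lambda(F)$ is contained in the closure of $\bigcup_i(A_i\cup B_i)$, which is disjoint from $N$; hence $h_0\in H\setminus F$ and $H\not\le F$.

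The main obstacle I anticipate is that $S$ is not assumed proper, so $\widehat S$ need not be compact, metrizable, or even Hausdorff, and one cannot invoke topological convergence or separation of points naively. I would therefore express every neighborhood and every limit in the two previous paragraphs through Gromov products based at $s_0$, namely sets of the form $\{y\in\widehat S:(y,\xi)_{s_0}>R\}$, and use the quantitative, uniform form of the north--south dynamics recorded in Lemma~\ref{north-south} to make the inclusions rigorous; the separation $x_i(f_i^{+})\ne(f_i')^{-}$ must likewise be read as a lower bound on a Gromov product. A secondary, bookkeeping obstacle is the organization of infinitely many generators: one must guarantee that the infinitely many sets $A_i,B_i$ are genuinely pairwise disjoint and leave room for both $s_0$ and the reserved region $N$, which I would handle by an inductive choice of fixed points together with radii $R_i\to\infty$.
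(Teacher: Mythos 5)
Your single-index analysis is sound and is essentially the paper's mechanism: high powers $a_i=f_i^{\,n_i}$, $b_i=(f_i')^{m_i}$ absorb $x_i$ via north--south dynamics, giving $g_i(\widehat S\setminus B_i)\subseteq A_i$ and $g_i^{-1}(\widehat S\setminus A_i)\subseteq B_i$ under the genericity condition $x_i(f_i^+)\ne (f_i')^-$, and the quantifier order ($n_i$ and $m_i$ chosen after the two auxiliary neighborhoods) is consistent. Your limit-set argument for $H\not\le F$ (a reserved loxodromic $h_0$ with $h_0^{\pm}\in\Lambda(H)$ but $h_0^{\pm}\notin\Lambda(F)$, since $\Lambda(F)$ lies in the closure of $\bigcup_i(A_i\cup B_i)$, which misses the open set $N$) is also correct, though heavier than the paper's observation that every nontrivial word in $F$ moves $f^+$ while $f$ fixes it. The genuine gap is in the passage to infinitely many indices: you assert that one can choose independent loxodromics $f_i,f_i'\in H$ so that the points $\{f_i^-,(f_i')^+\}_i$ are pairwise distinct, all lie in $\Lambda(H)\setminus N$, and then admit pairwise disjoint neighborhoods $A_i,B_i$. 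None of this follows from the mere existence of infinitely many independent loxodromics in $H$. An arbitrary infinite independent family can have fixed points accumulating at $h_0^{\pm}$ (for instance $h_j=h_0^{\,j}gh_0^{-j}$, whose fixed points converge to $h_0^+$, so no fixed choice of $N$ avoids them all), or accumulating on one another (pairwise distinctness does not prevent $f_i^-\to(f_1')^+$, in which case no neighborhood $A_1$ of $(f_1')^+$ can be disjoint from all the $B_i$). Your proposed remedy, ``an inductive choice of fixed points together with radii $R_i\to\infty$,'' begs the question: at step $i$ you must exhibit a new independent pair in $H$ whose fixed points lie in the open set left free by the previously chosen regions, and nothing in your argument guarantees that such a pair exists.

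This existence is exactly what the paper's conjugation trick supplies, and it is the one idea your proposal is missing. The paper fixes a single independent pair $f,g\in H$ and manufactures all needed loxodromics as conjugates $k_n=f^{s_n}gf^{-s_n}$ and $l_n=f^{t_n}gf^{-t_n}$; by Remark \ref{congugation} and Lemma \ref{north-south}, their fixed points $f^{s_n}g^{\pm}$ and $f^{t_n}g^{\pm}$ converge to $f^+$ as $s_n,t_n\to\infty$. Maintaining the invariant (condition (c) of the paper's proof) that every region constructed so far avoids $f^+$, the open set $U_n$ remaining at step $n$ is always a neighborhood of $f^+$, so conjugates with fixed points inside $U_n$, and avoiding the finitely many values excluded by the genericity condition, always exist. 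Inserting this production mechanism -- either keeping your reserved point $h_0^{\pm}$, or more economically letting $f^+$ serve simultaneously as the accumulation point, the ping-pong test point, and the certificate that $f\notin F$, as the paper does -- completes your argument.
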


\begin{rem}
    It is worth noting that, for the proof of Theorem \ref{main0}, we only need the conclusion $H\not\subseteq F$; however, the assertion that $F$ is free is obtained as a by-product of the proof without any additional arguments. 
\end{rem}

\begin{proof}[Proof of Proposition \ref{main_prop}]
Since $H$ is a general type subgroup of $\operatorname{Isom}(S)$, it contains two independent loxodromic elements  $f$ and $g$. We will first construct auxiliary elements $a_1, b_1, a_2, b_2, \ldots \in \langle f, g \rangle$ and a sequence of nonempty pairwise disjoint subsets $A_1^+, A_1^-, B_1^+, B_1^-, A_2^+, A_2^-, B_2^+, B_2^-, \ldots $ of $\widehat S$ such that the following conditions hold for all $i\in \NN$.

\begin{enumerate}
    \item[(a)]  $a_i(\widehat{S}\setminus A_i^-)\subseteq A_i^+$, $a_i^{-1}(\widehat{S}\setminus A_i^+)\subseteq A_i^-$, $b_i(\widehat{S}\setminus B_i^-)\subseteq B_i^+$, and  $b_i^{-1}(\widehat{S}\setminus B_i^+)\subseteq B_i^-$.
    \item[(b)] $x_i(A_i^+)\cap B_i^{-} =\emptyset$ (equivalently, $A_i^+\cap x^{-1}_i (B_i^-)=\emptyset$).
    \item[(c)] $ f^+ \not\in \overline{A_i^+} \cup \overline{A_i^{-}} \cup \overline{B_i^+}\cup \overline{B_i^-}$.
\end{enumerate}

We proceed by induction on $i$. The base is similar to the inductive step, so we treat them simultaneously. Let $n\geq 1$. Suppose we have already constructed elements $a_i, b_i$ and sets $A_i^+, A_i^-, B_i^+, B_i^-$ satisfying conditions (a)--(c) for all $i < n$. Part (c) of the inductive assumption implies that the open set
$$
U_n=\widehat S\setminus\bigcup_{i=1}^{n-1} \left(\overline{A_i^+}\cup \overline{A_i^-}\cup \overline{B_i^+}\cup \overline{B_i^-}\right)
$$
contains $f^+$. Consider an element $k_n = f^{s_n}gf^{-s_n}$ for some $s_n \in \NN$. By Remark \ref{congugation},  $k_n$ is loxodromic with $k_n^+ =f^{s_n}g^+$ and $ k_n^- =f^{s_n}g^-$. Since $f$ and $g$ are independent, we have $k_n^\pm \ne f^+$ and $k_n^\pm \to f^+$ as $s_n\to \infty$ by Lemma \ref{north-south}. Therefore, there exists $s_n\in \NN $ such that 
\begin{equation}\label{Eq:kn}
\{ k_n^+,  k_n^-\} \in U_n\setminus\{ f^+\}.
\end{equation}
Similarly, we can find $t_n \in \NN$ such that $l_n=f^{t_n}gf^{-t_n}$ is loxodromic with 
\begin{equation}\label{Eq:ln}
\{ l_n^+,  l_n^-\}\in U_n\setminus \{ f^+, k_n^+, k_n^-, x_n k_n^+\}.
\end{equation}

\begin{figure}
  \centering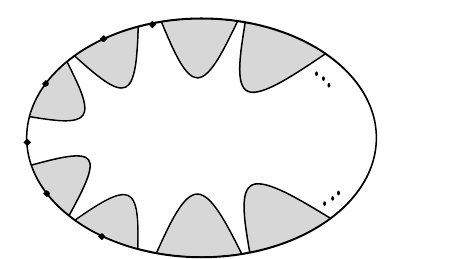\\
  \caption{Choosing the neighborhoods $A_n^+, A_n^-, B_n^+, B_n^-$ in $U_n$.}\label{fig}
\end{figure}

Further, (\ref{Eq:kn}) and (\ref{Eq:ln}) guarantee the existence of disjoint open neighborhoods $A_n^+, A_n^-, B_n^+, B_n^-$ of points $k_n^+$, $k_n^-$, $l_n^+$, $l_n^-$, respectively, that belong to $U_n$ and satisfy conditions (b), (c) for $i=n$ (see Fig. \ref{fig}). Clearly, these sets are disjoint from the sets $A^\pm_i$, $B^\pm _i$ constructed at all previous steps. Applying Lemma \ref{north-south} again, we can find $p_n, q_n \in \NN$  such that $a_n=k_n^{p_n}$ and $b_n=l_n^{q_n}$ satisfy condition (a). 

For brevity, we use the notation $y_i=b_ix_ia_i$ for all $i\in \NN$. Combining (a), (b), and the disjointness of the constructed sets, we obtain
\begin{equation}\label{Eq:yi}
 y_i(\widehat S\setminus A_i^-)=b_ix_ia_i (\widehat S\setminus A_i^-)\subseteq b_ix_i(A_i^+) \subseteq b_i(\widehat S\setminus B_i^-) \subseteq B_i^+ \subseteq \widehat S\setminus A_i^-,
\end{equation} 
for all $i \in \NN$. Similarly, we have
\begin{equation}\label{Eq:yi-1}
 y_i^{-1}(\widehat S\setminus B_i^+)= a_i^{-1}x_i^{-1}b_i^{-1}(\widehat S\setminus B_i^+)\subseteq a_i^{-1}x_i^{-1}(B_i^-) \subseteq a_i^{-1}(\widehat S\setminus A_i^+) \subseteq A_i^- \subseteq \widehat S\setminus B_i^+.
\end{equation}
Iterating (\ref{Eq:yi}) and (\ref{Eq:yi-1}) yields 
$y_i^{n}(\widehat S\setminus A_i^-)\subseteq B_i^+$ if $n>0$ and $y_i^{n}(\widehat S\setminus B_i^+)\subseteq A_i^-$ if $n<0$. Letting $C_i=A_i^+ \cup B_i^-$, we obtain
\begin{equation} \label{ping-pong}
y_i^{n}(\widehat S\setminus C_i) \subset C_i
\end{equation}
for all $i\in \NN$ and all $n\in \ZZ\setminus\{0\}$.

We now use the standard ping-pong argument (see \cite[Proposition 1.1]{Tits}) to show that the subgroup $F = \langle y_1, y_2, \ldots \rangle$ is free and does not contain $H$. Consider an arbitrary nonempty reduced word $w=y_{i_1}^{n_1}y_{i_2}^{n_2} \ldots y_{i_m}^{n_m}$ in the generators $\{y_1, y_2, \ldots\}$, where $n_j \in \ZZ \setminus\{0\}$ for $j \in \{1 \dots m\}$. Note that the sets $C_{i_j}$ are pairwise disjoint by construction and do not contain $f^+$ by (c). Using (\ref{ping-pong}), we obtain
$$
\begin{array}{rcl}
wf^+& = & y_{i_1}^{n_1}y_{i_2}^{n_2} \ldots y_{i_m}^{n_m}f^+ \in y_{i_1}^{n_1}y_{i_2}^{n_2} \ldots y_{i_m}^{n_m} (\widehat{S} \setminus C_{i_m}) \subset y_{i_1}^{n_1}y_{i_2}^{n_2} \ldots y_{i_{m-1}}^{n_{m-1}}(C_{i_m})\subset \\ 
&& y_{i_1}^{n_1}y_{i_2}^{n_2} \ldots y_{i_{m-1}}^{n_{m-1}}(\widehat{S} \setminus C_{i_{m-1}}) \subset \ldots \subset y_{i_1}^{n_1}(\widehat{S} \setminus C_{i_1}) \subset C_{i_1}.
\end{array}
$$
By (c), we have $wf^+ \neq f^+$; in particular, $w \neq 1$. This proves that $F$ is freely generated by $\{y_1, y_2, \ldots \}$. Moreover, since $f^+$ is fixed by $f$, we obtain that $f \notin F$; therefore, $F$ does not contain $H$.
\end{proof}

The following result is well-known (see, for example, \cite[Theorem 4.5]{Osi}, where a much more general fact is proved).

\begin{lem}\label{dichotomy}
Let $S$ be a hyperbolic space. Suppose that $G\le \operatorname{Isom} (S)$ is a subgroup of general type. Then every normal subgroup of $G$ either has bounded orbits in $S$ or is also of general type.
\end{lem}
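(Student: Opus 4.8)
The plan is to combine the standard Gromov classification of isometric actions on a hyperbolic space with the rigidity imposed on $N$ by its normality in the general type group $G$. Let $N\le G$ be a normal subgroup and suppose $N$ does not have bounded orbits; I would then show that $N$ is of general type, that is (by the characterization recalled just before Proposition~\ref{main_prop}), that $N$ contains two independent loxodromic elements. Fix two independent loxodromics $f,g\in G$, which exist because $G$ is of general type. The first tool I would record is the equivariance of limit sets and of fixed-point sets. Since the limit set is computed from an orbit closure and is basepoint-independent, for every $h\in G$ the homeomorphism of $\widehat S$ induced by $h$ sends $\Lambda(N)=\overline{Ns}\cap\partial S$ onto $\Lambda(hNh^{-1})$, so normality gives $h\,\Lambda(N)=\Lambda(N)$; hence $\Lambda(N)$ is a $G$-invariant subset of $\partial S$. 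The same normality yields that the set of points of $\partial S$ fixed by all of $N$ is $G$-invariant: if $\xi$ is fixed by $N$, then for $h\in G$ and $n\in N$ we have $n(h\xi)=h\big((h^{-1}nh)\xi\big)=h\xi$, because $h^{-1}nh\in N$.

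Next I would invoke the classification (see \cite{Gro} or \cite[Theorem~4.5]{Osi}): an unbounded action on a hyperbolic space is exactly one of horocyclic, lineal, focal, or general type, and I claim the first three are incompatible with our hypotheses. The uniform mechanism is this: in each of these three cases I would produce a finite-index subgroup $G_0\le G$ fixing a point of $\partial S$, which is absurd, since $G_0$ is itself of general type and therefore fixes no boundary point. (Finite index is enough: by pigeonhole some positive powers $f^{j},g^{l}$ lie in $G_0$, and these are loxodromic with fixed points $f^{\pm}$, $g^{\pm}$ by Lemma~\ref{north-south} and Remark~\ref{congugation}, hence still independent, so $G_0$ is of general type.) Concretely: if $N$ is horocyclic, then $\Lambda(N)$ is a single point, which is $G$-invariant and hence fixed by all of $G$; if $N$ is focal, then $N$ has a unique fixed point in $\partial S$, which is $G$-invariant by the computation above and hence fixed by all of $G$; and if $N$ is lineal, then $\Lambda(N)=\{\xi^{+},\xi^{-}\}$ is a $G$-invariant two-point set, so the kernel of the induced action of $G$ on it is a subgroup of index at most $2$ fixing both $\xi^{+}$ and $\xi^{-}$. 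Eliminating these three cases leaves only the general type, which is what I want.

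I expect the main obstacle to be not the formal normality bookkeeping, which is routine, but making the three exclusions precise: one needs that horocyclic actions have a single limit point and that focal actions possess a canonical unique fixed point at infinity. These are the delicate parts of the classification, all the more so because $S$ is not assumed proper, and they are exactly the content of the general statement quoted from \cite{Osi}. As a more self-contained alternative that sidesteps the full classification, one could argue constructively: given the loxodromic $f\in G$ and an element $n\in N$ with $n\{f^{+},f^{-}\}\cap\{f^{+},f^{-}\}=\emptyset$, the commutator $[n,f^{k}]=n\,(f^{k}n^{-1}f^{-k})$ lies in $N$ by normality and, for large $k$, is loxodromic by a north--south/ping-pong argument in the spirit of the proof of Proposition~\ref{main_prop} (it is a product of the loxodromic $nf^{k}n^{-1}$, with fixed points $n f^{\pm}$, and $f^{-k}$, with fixed points $f^{\pm}$, and these pairs are disjoint); varying the conjugator $n$ then yields two independent loxodromics inside $N$. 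This route is more hands-on but more technical, chiefly because securing a suitable $n\in N$ again requires ruling out that $N$ preserves $\{f^{+},f^{-}\}$.
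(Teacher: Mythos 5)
Your argument is correct, but it is genuinely different from what the paper does: the paper offers no proof of Lemma \ref{dichotomy} at all, declaring it well known and citing \cite[Theorem 4.5]{Osi}, where a much more general fact is established. Your derivation from the Gromov classification is the natural hands-on alternative, and its steps check out: normality makes both $\Lambda(N)$ and the set of boundary points fixed by $N$ invariant under $G$; a horocyclic or focal $N$ then hands $G$ a global fixed point on $\partial S$, while a lineal $N$ hands a subgroup $G_0\le G$ of index at most $2$ such a fixed point, which is absurd because $G_0$ inherits two independent loxodromics (suitable positive powers of $f$ and $g$, by pigeonhole) and is therefore itself of general type. So, modulo the classification of unbounded actions into horocyclic, lineal, focal, and general type, together with the attendant facts (one-point limit set in the horocyclic case, two-point limit set in the lineal case, unique fixed point at infinity in the focal case), the lemma follows. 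As for what each route buys: the paper's citation is shorter and the cited theorem is stronger, applying well beyond normal subgroups, whereas your proof makes the elementary mechanism (equivariance plus exclusion) visible and correctly locates all the analytic difficulty---especially since $S$ is not assumed proper---in the classification itself, which you would still need to quote from the literature (e.g., \cite{Gro}, \cite{Ham}, or \cite{book}; note that \cite[Theorem 4.5]{Osi} is essentially the dichotomy you are proving rather than the classification, so citing it for the classification would short-circuit your argument). Two minor points: the fact that positive powers of a loxodromic are loxodromic with the same fixed points follows from Lemma \ref{north-south} and the definition of loxodromic via limit sets, not from Remark \ref{congugation}, which concerns conjugates; and your closing commutator sketch should simply be dropped---it is the only place where a genuine gap could arise, since securing an element $n\in N$ with $n\{f^+,f^-\}\cap\{f^+,f^-\}=\emptyset$ presupposes exactly the exclusions your main argument already carries out.
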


We are now ready to prove our first main result.

\begin{proof}[Proof of Theorem \ref{main0}]
For the sake of contradiction, assume that $\Phi(G)$ has unbounded orbits and let $G=\langle x_1, x_2, \ldots\rangle$. By Lemma \ref{dichotomy}, $\Phi(G)$ is a subgroup of general type of $\operatorname{Isom} (S)$. Therefore, there exist two independent loxodromic elements $f,g\in \Phi(G)$. 

Applying Proposition \ref{main_prop} to the general type subgroup $H=\langle f,g\rangle $, we can find elements $a_1, b_1, a_2, b_2... \in H$ such that the subgroup $F=\langle b_1x_1a_1, b_2x_2a_2,... \rangle$ does not contain $H$. Obviously, $G = \langle f, g, b_1x_1a_1, b_2x_2a_2,...\rangle$. Since $\Phi(G)$ consists of non-generating elements, we obtain $G = \langle b_1x_1a_1, b_2x_2a_2,...\rangle = F$, which contradicts our assumption.
\end{proof}

Our next goal is to show how to apply Theorem \ref{main0} in certain particular cases. Recall that an \textit{$\mathbb R$-tree} is a geodesic metric space without non-trivial simple loops; an action of a group $G$ on an $\RR$--tree $T$ is said to be \textit{minimal} if $G$ does not fix setwise any proper subtree of $T$. For the definition and basic properties of acylindrically hyperbolic groups, we refer to \cite{Osi16}.

The first part of the corollary below is a minor generalization of the last claim of \cite[Theorem~1.17]{GG}; the second part was originally proved by Hull in \cite{H}.   
\begin{cor}\label{corPC}
\begin{enumerate}
 \item[(a)] Let $T$ be an $\mathbb R$ tree, and let $G\le \operatorname{Isom}(S)$. Suppose that $G$ is countable, of general type, and acts minimally on $T$. Then $\Phi(G)$ fixes $T$ pointwise; in particular, we have $\Phi(G)=\{ 1\}$ whenever the action of $G$ on $T$ is faithful. 

 \item[(b)] (Hull, {\cite[Theorem 1.9]{H}})\, For any countable acylindrically hyperbolic group $G$, $\Phi(G)$ is finite.
\end{enumerate}    
\end{cor}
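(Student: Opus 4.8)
The plan is to derive both statements from Theorem \ref{main0} together with standard structural facts about the relevant actions. The common observation is that an $\mathbb{R}$-tree is $0$-hyperbolic, hence a hyperbolic space, and that an acylindrically hyperbolic group comes by definition with a non-elementary acylindrical action on a hyperbolic space $S$; by the classification of acylindrical actions such an action is necessarily of general type. Thus in both parts $G$ is a countable general type subgroup of the isometry group of a hyperbolic space, so Theorem \ref{main0} applies and yields that $\Phi(G)$ has bounded orbits. The remaining work is to convert ``bounded orbits'' into the desired conclusion using, respectively, the tree structure in (a) and acylindricity in (b). (In (a) I read the action as an action of $G$ on $T$.)

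For part (a), I would argue as follows. By Theorem \ref{main0} the orbits of $\Phi(G)$ on $T$ are bounded, and a group acting on an $\mathbb{R}$-tree with bounded orbits has a global fixed point; hence the set $\operatorname{Fix}(\Phi(G))=\{x\in T : \phi x=x \text{ for all } \phi\in\Phi(G)\}$ is nonempty. Being an intersection of fixed-point sets of individual isometries of $T$, it is a subtree. The key point is normality: for $x\in\operatorname{Fix}(\Phi(G))$, $g\in G$, and $\phi\in\Phi(G)$ we have $\phi(gx)=g(g^{-1}\phi g)x=gx$ since $g^{-1}\phi g\in\Phi(G)$, so $gx\in\operatorname{Fix}(\Phi(G))$. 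Therefore $\operatorname{Fix}(\Phi(G))$ is a nonempty $G$-invariant subtree, and minimality of the action forces $\operatorname{Fix}(\Phi(G))=T$; that is, $\Phi(G)$ fixes $T$ pointwise. If the action is faithful, the pointwise stabilizer of $T$ is trivial, whence $\Phi(G)=\{1\}$.

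For part (b), let $G$ act acylindrically and non-elementarily, hence of general type, on a hyperbolic space $S$, and fix a basepoint $s\in S$. By Theorem \ref{main0} the orbit $\Phi(G)s$ is bounded, so there is a constant $D$ with $d(s,\phi s)\le D$ for all $\phi\in\Phi(G)$. I would then use normality to exhibit a second point, far from $s$, that is also moved only slightly by every element of $\Phi(G)$: since $G$ contains a loxodromic element $g$, for all $\phi\in\Phi(G)$ and all $n$ we get $g^{-n}\phi g^{n}\in\Phi(G)$, and hence $d(g^n s,\phi g^n s)=d(s,g^{-n}\phi g^{n}s)\le D$. Now apply acylindricity with $\varepsilon=D$ to obtain constants $R,N>0$; choosing $n$ so large that $d(s,g^n s)\ge R$, which is possible because $g$ is loxodromic, every element of $\Phi(G)$ moves both $s$ and $g^n s$ by at most $D=\varepsilon$, so $|\Phi(G)|\le N<\infty$. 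The only delicate point, and the step I expect to be the crux, is that the mild displacement of the far point $g^n s$ must hold \emph{simultaneously} for all of $\Phi(G)$; this is exactly what normality provides via the conjugation identity above, and it is what allows acylindricity to upgrade bounded orbits into a uniform finiteness bound. Everything else is routine once Theorem \ref{main0} is in hand.
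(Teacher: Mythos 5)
Your proof of part (a) is essentially identical to the paper's: Theorem \ref{main0} gives bounded orbits, bounded orbits on an $\mathbb{R}$-tree give a global fixed point, the fixed-point set is a subtree, normality of $\Phi(G)$ makes it $G$-invariant, and minimality forces it to be all of $T$. (The paper establishes the subtree property via uniqueness of geodesics rather than as an intersection of fixed-point sets of isometries; this is an immaterial difference.)

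Part (b) is where you genuinely diverge from the paper, and your route is correct. The paper cites a black-box result \cite[Theorem 1.2 and Lemma 7.1]{Osi}: every acylindrically hyperbolic group admits a general type action on a hyperbolic space under which \emph{every} infinite normal subgroup has unbounded orbits; combined with Theorem \ref{main0}, finiteness of $\Phi(G)$ is immediate. You instead work directly with the defining non-elementary acylindrical action: Theorem \ref{main0} yields a uniform displacement bound $D$ for $\Phi(G)$ at a basepoint $s$, normality (via the conjugation identity $d(g^n s,\phi g^n s)=d(s,g^{-n}\phi g^n s)\le D$) transports that same bound to the far point $g^n s$, and acylindricity with $\varepsilon=D$ then caps $|\Phi(G)|\le N$. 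In effect you reprove, for the special case of $\Phi(G)$, the standard lemma that a normal subgroup with bounded orbits under a non-elementary acylindrical action is finite --- which is the content the paper outsources to \cite{Osi}. Your argument is more self-contained and elementary, at the cost of needing one citation you correctly flag but should make explicit: that a non-elementary acylindrical action is automatically of general type, which follows from the classification of acylindrical actions in \cite[Theorem 1.1]{Osi16} together with the fact (quoted in Section 2 of the paper) that two independent loxodromic elements force general type. The paper's route is shorter on the page; yours exposes the finiteness mechanism, and the conjugation-plus-acylindricity step you identify as the crux is indeed exactly where the work happens.
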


\begin{proof}
(a)\; Let $T_0\subseteq T$ be the set of all points of $T$ fixed by $\Phi(G)$. By Theorem \ref{main0}, $\Phi(G)$ must have bounded orbits. It is well-known and easy to prove that every group acting on an $\mathbb R$-tree with bounded orbits must fix a point. Therefore, $T_0$ is non-empty. Since $T$ is an $\mathbb R$-tree, every two points in $T$ are connected by a unique geodesic. Therefore, $T_0$ is connected. Since $\Phi(G)$ is normal in $G$, $T_0$ is also $G$-invariant. Thus, $T_0$ is a $G$-invariant subtree of $T$. By minimality, we have $T_0=T$.

(b) By \cite[Theorem 1.2 and Lemma 7.1]{Osi}, every acylindrically hyperbolic group $G$ admits a general type action on a hyperbolic space $S$ such that the action of every infinite normal subgroup of $G$ is non-elementary (in particular, has unbounded orbits). Combining this with Theorem \ref{main0}, we conclude that $\Phi(G)$ must be finite. 
\end{proof}

Finally, we provide an example showing that Theorem \ref{main0} may fail for non-elementary actions.

\begin{ex}\label{ex}
Let $E$ be the minimal subfield of $\RR$ containing $\{ 2^q\mid q\in \QQ\}$. Consider
$$
H=\left\{ \left.\left(
\begin{array}{cc}
2^q & r\\
0 & 2^{-q} 
\end{array}
\right)\;\right|\; q\in \QQ,\; r\in E
\right\}.
$$
Clearly, $H$ is a group. The subgroup
$$
N=\left\{ \left.\left( 
\begin{array}{cc}
1 & r\\
0 & 1
\end{array}
\right)\;\right|\; r\in E
\right\}
$$
is a divisible normal subgroup of $H$ and $H/N\cong \QQ$. By \cite[Theorem 1]{HN}, every extension of a divisible abelian group by $\QQ$ has no maximal subgroups. Therefore, $\Phi(H)=H$. On the other hand, it is easy to verify that the standard action of $H$ on $\mathbb H^2$ by M\" obius transformations is non-elementary in the sense of Definition \ref{def}. (For example, this follows from the fact that $H$ contains both loxodromic and parabolic elements, which is impossible for elementary actions; we leave details to the reader.)
\end{ex}


\section{Lacunary hyperbolic groups}


Recall that a subgroup $H$ of a hyperbolic group $G$ is said to be \textit{elementary} if $H$ is virtually cyclic. It is well-known that every torsion-free virtually cyclic group is cyclic. This fact easily follows from Schur's theorem (see \cite{Sch} or \cite[Theorem 5.7]{Isa}). Alternatively, one can refer to a considerably more difficult theorem of Stallings \cite{Sta}, which asserts that finitely generated, torsion-free, virtually free groups are free. Thus, for subgroups of torsion-free hyperbolic groups, being non-elementary is the same as being non-cyclic, and we will use these words interchangeably. 

We will utilize the following version of \cite[Theorem 2]{Ols} in the particular case of torsion-free groups.

\begin{thm}[Olshanskii, {\cite{Ols}}]\label{Ols} Let $G$ be a torsion-free hyperbolic group, $H_1, \ldots, H_m$ a collection of non-cyclic subgroups of $G$. Further, for every $i=1, \ldots , m$, let $t_{i1}, \ldots, t_{in_i}$ be an arbitrary finite sequence of elements of $G$. For any finite subset $M$ of $G$, there exist elements $h_{ij}\in H_i$ ($i=1, \ldots, m$,  $j=1, \ldots, n_i$),  such that the quotient group 
\begin{equation}\label{Gbar}
\overline G=G\Big/\bll \big\{ t_{ij}h_{ij}\mid i=1, \ldots, m, \; j=1, \ldots, n_i\big\}\brr
\end{equation}
satisfies the following.
\begin{enumerate}
  \item[(a)] $\overline G$ is non-cyclic, hyperbolic, and torsion-free.
  \item[(b)] The restriction of the natural homomorphism $G \to \overline G$ to $M$ is injective.
\end{enumerate}
\end{thm}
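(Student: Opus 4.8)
The plan is to deduce this statement from small cancellation theory over hyperbolic groups, in the form developed by Gromov \cite{Gro} and Olshanskii \cite{Ols}. The relators to be imposed are precisely the words $R_{ij}=t_{ij}h_{ij}$, and the only freedom is in the choice of the elements $h_{ij}\in H_i$; the strategy is to choose them so that the resulting symmetrized family of relators satisfies a strong geometric small cancellation condition relative to the hyperbolic structure of $G$. The geometric input I would record first is that, since $G$ is torsion-free hyperbolic and each $H_i$ is non-cyclic, $H_i$ is non-elementary; hence it contains two independent loxodromic elements and a quasiconvex non-abelian free subgroup, obtained by the same ping-pong argument as in Proposition \ref{main_prop}. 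This yields an abundant supply of elements of $H_i$ that are arbitrarily long in $G$, not proper powers, and pairwise in general position.

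Using this supply, I would choose the $h_{ij}$ so that each $R_{ij}$ is not a proper power and the symmetrized set $\{R_{ij}^{\pm 1}\}$ satisfies a geometric condition $C'(\lambda)$ (relative to the hyperbolicity constant of $G$) with $\lambda$ as small as we wish, and so that every $R_{ij}$ is long compared to the fixed finite set $M$. Concretely, one takes $h_{ij}$ to be suitable products of high powers of independent loxodromic elements of $H_i$; thin-triangle and stability estimates in the Cayley graph then bound the length of any common subpath (piece) shared by two distinct relators, or by a relator and a fixed geodesic representing an element of $M$, by a constant independent of the chosen exponents. Driving the exponents up therefore forces the piece-to-relator length ratio below $\lambda$.

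I would then invoke the main technical theorem of small cancellation theory over hyperbolic groups. Once $\lambda$ is below an absolute threshold depending only on the hyperbolicity constant of $G$, the quotient $\overline G$ of (\ref{Gbar}) is hyperbolic, and the natural map $G\to\overline G$ is injective on the metric ball of radius proportional to $\min_{ij}|R_{ij}|$; choosing the $h_{ij}$ long enough makes this radius exceed $\max\{|m|:m\in M\}$, which gives part (b). Non-cyclicity of $\overline G$ in part (a) follows because a proper $C'(\lambda)$-quotient of a non-elementary hyperbolic group is again non-elementary (killing finitely many relators in general position cannot collapse the two independent loxodromic directions already present).

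The main obstacle is torsion-freeness, the remaining assertion of (a). New finite-order elements can arise in a small cancellation quotient only through relators that are proper powers or through the combinatorial structure of the relator interactions, so I would use a Greendlinger-type analysis of van Kampen diagrams over the hyperbolic group $G$ to show that every finite-order element of $\overline G$ is conjugate either into the image of a finite subgroup of $G$ or into a cyclic subgroup generated by some $R_{ij}$. Since $G$ is torsion-free and the $R_{ij}$ were chosen not to be proper powers, both alternatives force the element to be trivial. This classification of finite subgroups in the quotient is exactly the delicate heart of Olshanskii's argument, and it is where the care taken in selecting the $h_{ij}$ (non-power, general position, torsion-free ambient group) is genuinely needed.
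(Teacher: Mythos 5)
Your strategy is sound, but it takes a genuinely different route from the paper, because the paper does not prove Theorem \ref{Ols} at all: this is Olshanskii's theorem, quoted from \cite{Ols}, and what the paper supplies is a careful matching of statements. Concretely, the paper's two remarks observe that (i) Olshanskii's Theorem 2 requires the subgroups $H_i$ to be ``Gromov subgroups,'' a condition restated in terms of finite subgroups of $G$ and hence vacuously satisfied when $G$ is torsion-free, so ``non-cyclic'' suffices; and (ii) Olshanskii assumes that the elements $t_{ij}$ (his $a_1,\ldots,a_n$) generate $G$, which he uses only to make the maps $H_i\to\overline G$ surjective, and an inspection of his proof (page 403 of \cite{Ols}) shows this assumption is never needed for conclusions (a) and (b), so it can be dropped. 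Your proposal instead re-derives the theorem from scratch by geometric small cancellation over hyperbolic groups. That is in fact the method of Olshanskii's own proof, so the approach is not circular and would in principle succeed; its merit is that arbitrary $t_{ij}$ are handled natively, with no detour through the generation hypothesis. The cost is that every step you defer---choosing the $h_{ij}$ as products of high powers of independent loxodromics so that the relators $t_{ij}h_{ij}$ satisfy a geometric $C'(\lambda)$ condition, the Greendlinger-type theorem that the quotient is hyperbolic with injectivity radius comparable to the shortest relator, and above all the torsion analysis (where your phrasing ``conjugate into a cyclic subgroup generated by some $R_{ij}$'' should really be ``conjugate to a power of the image of a proper root of some relator,'' since the relators themselves die in $\overline G$)---is precisely the technical content of \cite{Ols}, amounting to reconstructing that paper. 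As a self-contained argument yours is therefore an outline rather than a proof: if one is allowed to quote the small cancellation machinery as a black box, it is both shorter and safer to quote Olshanskii's theorem itself and match statements, which is exactly what the paper does.
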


Two remarks are in order. First, in \cite[Theorem 2]{Ols}, the subgroups $H_1, \ldots, H_m$ are required to satisfy a certain additional condition (specifically, to be Gromov subgroups in the terminology of \cite{Ols}), which is restated in terms of finite subgroups of $G$ in \cite[Theorem 1]{Ols}. For torsion-free groups, the latter condition is vacuously true. Thus, every non-elementary subgroup of a torsion-free hyperbolic group is a Gromov subgroup. (This fact is explicitly stated in the second sentence of the proof of \cite[Corollary 4]{Ols}.)

Second, the statement of \cite[Theorem 2]{Ols} does not provide an explicit presentation of the group $\overline G$.  However, the presentation of $\overline G$ can be obtained from the discussion of the proof on page 403; this presentation is exactly like ours, where elements $t_{ij}$ are denoted by $a_1, \ldots, a_n$. In \cite{Ols}, it is moreover assumed that $a_1, \ldots, a_n$ generate $G$, but this assumption is unnecessary for deriving the claims of Theorem \ref{Ols}.

More precisely, Olshanskii's version of the theorem asserts the existence of a quotient group $\overline G$ satisfying certain conditions (including our conditions (a) and (b)) such that the restriction of the natural homomorphism $G\to \overline G$ to each $H_i$ is surjective. The latter condition is ensured by taking  $\{t_{i1}, \ldots, t_{in_i}\}$ to be a generating set of $G$ (the same set for all $i$) and passing to a quotient group of the form (\ref{Gbar}), where $h_{i1}, \ldots, h_{in_i}$ are certain carefully chosen elements of $H_i$ (see the second paragraph on page 403 in \cite{Ols}). Repeating the same argument for arbitrary elements $t_{i1}, \ldots, t_{in_i}\in G$ verbatim yields Theorem \ref{Ols}.

For a group $G$ generated by a set $S$, we denote by $|\cdot |_S$ and $\d_S$ the corresponding word length and metric, respectively. Further, for every $r\ge 0$, we let 
$$
Ball_{G,S}(r) =\{ g\in G\mid |g|_S\le r\}.
$$
Finally, by $Cay(G,S)$ we denote the Cayley graph of $G$ with respect to $S$.

\begin{defn}
Let $G=\langle S\rangle$ and $G'$ be groups, and let $\alpha\colon G \rightarrow G^{'}$ be a homomorphism. The \emph{injectivity radius of $\alpha$} with respect to $S$ denoted $IR_S(\alpha)$, is the maximal $r$ such that the restriction of $\alpha$ to $Ball_{G,S}(r)$ is injective; if $\alpha $ is a monomorphism, we let $IR_S(\alpha)=\infty$.
\end{defn} 

We will make use of the following characterization of lacunary hyperbolic groups obtained in \cite{OOS}. Readers unfamiliar with asymptotic cones can take it as a definition.

\begin{thm}[\cite{OOS}, Theorem 1.1(3)]\label{lac_hyp}
A finitely generated group $G$ is lacunary hyperbolic if and only if $G$ is the direct limit of a sequence of groups $G_i = \langle S_i\rangle$ and epimorphisms
$$
G_1\xrightarrow{ \; \alpha_1 \; } G_2\xrightarrow{ \; \alpha_2 \; } \dots 
$$
such that the following conditions hold: 
\begin{enumerate}
\item[(a)] $\alpha_i(S_i) = S_{i+1}$;
\item[(b)] each $S_i$ is finite and the Cayley graph of $G_i$ with respect to $S_i$ is $\delta_i$-hyperbolic;
\item[(c)] $\lim\limits_{i\to \infty}\delta_i/IR_{S_i}(\alpha_i)=0$.
\end{enumerate}
\end{thm}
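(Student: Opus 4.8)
The statement is an equivalence, so the plan is to prove the two implications separately, with the common technical bridge being the standard dictionary between the geometry of $Cay(G,S)$ at a scale $d_n$ and the geometry of the asymptotic cone $\mathrm{Con}^\omega(G,(d_n))$: a geodesic triangle in the cone based near the observation point is an $\omega$-limit of geodesic triangles of $Cay(G,S)$ with vertices in a ball of radius $O(d_n)$, and a triangle in the cone is degenerate (the defining property of an $\RR$-tree, equivalently $0$-hyperbolicity) precisely when the approximating triangles are $o(d_n)$-thin. Thus ``$\mathrm{Con}^\omega(G,(d_n))$ is an $\RR$-tree'' translates into ``for $\omega$-almost every $n$, every geodesic triangle with vertices in $Ball_{G,S}(Rd_n)$ is $\e_n d_n$-thin, where $\e_n \to 0$.'' The second ingredient is the local-to-global (Gromov--Cartan--Hadamard) principle for hyperbolicity: there is a universal constant $\lambda$ such that if all geodesic triangles of perimeter at most $\lambda\delta$ in a geodesic space are $\delta$-thin, then the whole space is $\delta'$-hyperbolic with $\delta'$ depending only on $\delta$.

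For the implication ($\Leftarrow$), suppose $G=\varinjlim G_i$ with maps $\alpha_i$ and conditions (a)--(c). Condition (a) lets me identify all the $S_i$ with a single finite generating set $S$ of $G$, so that the natural maps $\beta_i\colon G_i\to G$ are label-preserving. After passing to a subsequence I may assume $IR_{S_i}(\alpha_i)$ grows fast enough that $\beta_i$ is injective on $Ball_{G_i,S}(r_i)$ with $r_i$ comparable to $IR_{S_i}(\alpha_i)$; on such a ball $Cay(G,S)$ is isometric to $Cay(G_i,S)$, hence $\delta_i$-hyperbolic by (b). I then form the asymptotic cone with scaling sequence $d_i=r_i$. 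Any geodesic triangle in this cone is a limit of triangles lying in $Ball_{G,S}(O(d_i))$, which are $\delta_i$-thin; after rescaling by $d_i$ their thinness constant is $O(\delta_i/r_i)=O(\delta_i/IR_{S_i}(\alpha_i))\to 0$ by (c). Hence every triangle in the cone is $0$-thin, so the cone is an $\RR$-tree and $G$ is lacunary hyperbolic.

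For the harder implication ($\Rightarrow$), assume $\mathrm{Con}^\omega(G,(d_n))$ is an $\RR$-tree and use the dictionary above to extract a subsequence $(n_k)$ and constants $\e_k\to 0$ for which every geodesic triangle with vertices in $Ball_{G,S}(Rd_{n_k})$ is $\e_k d_{n_k}$-thin. Writing $r_k=d_{n_k}$, I would set $G_k=\langle S\mid \mathcal R_k\rangle$, where $\mathcal R_k$ is the set of all words over $S$ of length at most $L_k$ (with $L_k$ chosen proportional to $r_k$) that represent $1$ in $G$. Since enlarging $L_k$ only adds relations, there are canonical epimorphisms $\alpha_k\colon G_k\to G_{k+1}$ whose direct limit is $G=\langle S\mid \text{all relations}\rangle$, and $\alpha_k(S)=S$, giving (a). A word of length at most $L_k$ is trivial in $G_k$ if and only if it is trivial in $G$, so $Ball_{G_k,S}(L_k/2)$ is isometric to $Ball_{G,S}(L_k/2)$; this simultaneously yields $IR_{S}(\alpha_k)\ge L_k/2$ and the fact that geodesic triangles of perimeter $\lesssim r_k$ in $Cay(G_k,S)$ are $\e_k r_k$-thin. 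Feeding the last fact into the local-to-global principle (legitimate because $r_k/(\e_k r_k)=1/\e_k\to\infty$ eventually beats $\lambda$) shows each $Cay(G_k,S)$ is $\delta_k$-hyperbolic with $\delta_k=O(\e_k r_k)$, giving (b); and then $\delta_k/IR_{S}(\alpha_k)=O(\e_k r_k)/(L_k/2)=O(\e_k)\to 0$, giving (c).

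The main obstacle is entirely in the ($\Rightarrow$) direction and is concentrated in two points that must be handled with care. First, the passage from ``the cone is an $\RR$-tree'' to a genuine sequence of thinness estimates $\e_k\to 0$ on balls of radius $\sim r_k$: this is an $\omega$-limit argument in which one must ensure that geodesics, and hence geodesic triangles, in $Cay(G,S)$ converge to geodesics in the cone, that non-degeneracy is preserved in the limit, and then extract an honest subsequence from an $\omega$-almost-everywhere statement using non-principality of $\omega$. Second, and most importantly, the application of the local-to-global principle: one only controls the geometry of $Cay(G_k,S)$ inside a ball of radius $\sim r_k$, where it agrees with $G$, and the content of the Cartan--Hadamard theorem for hyperbolic spaces is that local thinness on a ball whose radius dwarfs the thinness constant forces global hyperbolicity of the infinite group $G_k$. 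Making the constants line up, so that $\delta_k$ is genuinely $O(\e_k r_k)$ rather than merely finite, is the crux, since it is precisely this linear dependence that makes the ratio in (c) tend to $0$.
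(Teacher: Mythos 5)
First, a point of comparison: the paper itself does not prove this statement at all --- it is imported from \cite[Theorem 1.1(3)]{OOS}, and the authors explicitly tell the reader it may be taken as the \emph{definition} of lacunary hyperbolicity. So your proposal can only be measured against the proof in \cite{OOS}, whose architecture it reproduces correctly in outline: truncated presentations plus the Gromov--Cartan--Hadamard local-to-global principle for ($\Rightarrow$), and transfer of thin-triangle estimates through balls on which $Cay(G,S)$ agrees with $Cay(G_i,S_i)$ for ($\Leftarrow$). Your ($\Rightarrow$) direction is essentially the argument of \cite{OOS}, and you correctly isolate its delicate points; note in particular that the extraction of $\varepsilon_k\to 0$ works even when the limit triangle degenerates, because in an $\RR$-tree every geodesic triangle is a tripod, so a sequence of triangles with thinness at least $\varepsilon d_n$ for $\omega$-many $n$ still produces a contradiction in the cone.

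The genuine gap is in ($\Leftarrow$), in two places. (i) The scaling $d_i=r_i$ (with $r_i=IR_{S_i}(\alpha_i)$) is too large. A triangle in $\mathrm{Con}^\omega(G,(d_i))$ has vertices at an arbitrary, fixed distance $R$ from the observation point; homogeneity of the cone lets you move one vertex to the basepoint, not all three. Such a triangle is approximated by triangles with vertices in $Ball_{G,S}(Rd_i)$, whereas $Cay(G,S)$ can be compared with the $\delta_i$-hyperbolic $Cay(G_i,S_i)$ only on a ball of radius about $r_i$. With $d_i=r_i$ nothing is controlled beyond the unit ball of the cone: conditions (a)--(c) permit $\alpha_i$ to kill elements of $G_i$-length, say, $10r_i$, and nothing prevents the corresponding relations from creating essential loops of diameter comparable to $r_i$ in $Cay(G,S)$, i.e., circles in the cone at the scaling $(r_i)$. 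The fix, as in \cite{OOS}, is an intermediate scale with $\delta_i=o(d_i)$ \emph{and} $d_i=o(r_i)$ (e.g., $d_i\sim\sqrt{\delta_i r_i}$): then for each fixed $R$ one has $Rd_i\le r_i$ for all large $i$, while the rescaled thinness $\delta_i/d_i$ still tends to $0$. (ii) The sentence ``after passing to a subsequence I may assume $\beta_i$ is injective on $Ball_{G_i,S}(r_i)$'' is not justified: injectivity of the map to the \emph{limit} $G$ at scale $\rho$ requires \emph{every} later $\alpha_j$ to be injective at that scale, and the injectivity radius of a composition is only bounded below by the minimum along the way, so passing to a subsequence (which composes the maps) cannot raise it; hypothesis (c) does not prevent $r_j$ from dipping far below $r_i$ for $j>i$. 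A repair: unless infinitely many $Cay(G_i,S_i)$ are trees (in which case the $G_i$ are eventually free of constant rank with basis mapped to basis, so $G$ is free and lacunary hyperbolic), the admissible $\delta_i$ are eventually bounded below by a universal positive constant, so (c) forces $r_i\to\infty$; one then works along indices $m$ at which the running infimum $\inf_{j\ge m}r_j$ is attained and equals $r_m$ --- along these indices $Ball_{G_m,S_m}(r_m/2)$ genuinely embeds isometrically into $G$, and $\delta_m=o(r_m)$ persists, after which your cone computation (with the corrected scale from (i)) goes through.
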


To prove Theorem \ref{main1}, we need a couple of auxiliary results. Recall that a subgroup $H$ of a group $G$ is called a \textit{retract} of $G$ if there is a homomorphism $\rho\colon G\to H$ such that the restriction of $\rho $ to $H$ is the identity map; the homomorphism $\rho$ is called a \textit{retraction}.

\begin{lem}\label{lem:retr}
A retract of a lacunary hyperbolic group is lacunary hyperbolic.
\end{lem}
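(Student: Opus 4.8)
The plan is to use the direct-limit characterization of lacunary hyperbolicity in Theorem~\ref{lac_hyp}. Write $G$ for the ambient lacunary hyperbolic group, $\iota\colon H\hookrightarrow G$ for the inclusion of the retract, and $\rho\colon G\to H$ for a retraction, so that $\psi=\iota\rho$ is an idempotent endomorphism of $G$ with image $H$. First I would fix a decomposition $G=\varinjlim(G_i,\alpha_i)$ as in Theorem~\ref{lac_hyp}, with $G_i=\langle S_i\rangle$ and $\alpha_i(S_i)=S_{i+1}$; let $\eta_i\colon G_i\to G$ be the canonical maps and write $S=\eta_1(S_1)=\{s_1,\dots,s_k\}$. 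Since $\rho(s_\ell)\in H\le G=\langle S\rangle$, I can fix words $w_\ell$ with $\rho(s_\ell)=w_\ell(S)$; then $H=\langle\rho(s_1),\dots,\rho(s_k)\rangle$ is finitely generated, and $\rho$ is $1$-Lipschitz from $(G,S)$ to $(H,T)$ where $T=\{w_1(S),\dots,w_k(S)\}$. I then introduce the approximating subgroups $H_i=\langle T_i\rangle\le G_i$ with $T_i=\{w_1(S_i),\dots,w_k(S_i)\}$. Because $\alpha_i(S_i)=S_{i+1}$, the restriction $\alpha_i|_{H_i}\colon H_i\to H_{i+1}$ is a well-defined epimorphism with $\alpha_i(T_i)=T_{i+1}$, giving condition (a) for the tower $(H_i,\alpha_i|_{H_i})$. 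Moreover, since the $H_i$ sit compatibly inside the $G_i$, the induced map $\varinjlim H_i\to\varinjlim G_i=G$ is injective (an element of $H_i$ that dies in $G$ already dies inside some $H_j$), with image $\langle\eta_i(T_i)\rangle=\langle T\rangle=H$; hence $\varinjlim H_i\cong H$.

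It remains to arrange conditions (b) and (c): that each $Cay(H_i,T_i)$ is $\delta_i'$-hyperbolic and that $\delta_i'/IR_{T_i}(\alpha_i|_{H_i})\to 0$. The natural strategy is to realize $H_i$ as a retract of $G_i$, since a retract of a hyperbolic group is quasiconvex and hence hyperbolic with hyperbolicity constant controlled by that of the ambient group and the Lipschitz constant of the retraction. The difficulty is that the assignment $s_\ell\mapsto w_\ell(S_i)$ need not define an endomorphism of $G_i$: a relation $r(S_i)=1$ of $G_i$ only forces $r(w(S_i))=1$ in the limit, i.e.\ in $G_j$ for $j$ large, not in $G_i$ itself. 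Here I would exploit that each $G_i$ is hyperbolic, hence finitely presented: applying the relation-lifting to the finitely many defining relators of $G_i$ produces $N(i)>i$ such that for all $j\ge N(i)$ the assignment $s_\ell\mapsto w_\ell(S_j)$ does extend to a homomorphism $\phi_{i,j}\colon G_i\to G_j$ with image in $H_j$ and $\eta_j\phi_{i,j}=\psi\eta_i$. After passing to a subsequence (which does not change $\varinjlim H_i$), I would combine the length bound $|\phi_{i,j}(h)|_{T_j}\le |h|_{S_i}$ with the identity $\eta_j\phi_{i,j}|_{H_i}=\eta_i|_{H_i}$ (valid since $\psi|_H=\mathrm{id}$) to transport the $1$-Lipschitz retraction from the limit back to finite level up to the injectivity radius. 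The lacunarity condition $\delta_i/IR_{S_i}(\alpha_i)\to 0$ then forces the relevant geodesic triangles of $H_i$ to be thin, yielding $\delta_i'$-hyperbolicity with $\delta_i'\le C\delta_i$, while the same comparison gives $IR_{T_i}(\alpha_i|_{H_i})\ge IR_{S_i}(\alpha_i)/L$ (with $L=\max_\ell|w_\ell|$), whence (c).

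The main obstacle is precisely this last step: the retraction does not descend to any single $G_i$, so one cannot quote ``a retract of a hyperbolic group is hyperbolic'' at finite level, and a priori a finitely generated subgroup of a hyperbolic group need not be hyperbolic at all. The real content of the proof is thus to show that, along a suitable subsequence, the subgroups $H_i$ are nonetheless hyperbolic with hyperbolicity constant comparable to $\delta_i$ and injectivity radius comparable to that of $\alpha_i$, using the lifted maps $\phi_{i,j}$ and the lacunarity gap to control distortion below the injectivity-radius scale. If this finite-level bookkeeping proves too delicate, an alternative is to argue through asymptotic cones: a retract is undistorted in $G$, so for any ultrafilter $\omega$ and scaling sequence the cone of $H$ bi-Lipschitz embeds into the cone of $G$; choosing these so that the cone of $G$ is an $\RR$-tree, the cone of $H$ embeds in a $0$-hyperbolic space and is therefore Gromov hyperbolic, and a Gromov hyperbolic asymptotic cone of a group is an $\RR$-tree, so $H$ is lacunary hyperbolic.
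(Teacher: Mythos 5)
Your main route is not a complete proof, and the gap is the one you yourself flag at the end: everything reduces to showing that the approximating subgroups $H_i=\langle T_i\rangle\le G_i$ satisfy conditions (b) and (c) of Theorem~\ref{lac_hyp}, i.e.\ that $Cay(H_i,T_i)$ is $\delta_i'$-hyperbolic with $\delta_i'\le C\delta_i$ along a subsequence, and this is precisely the part left as a hope (``the lacunarity condition forces the relevant geodesic triangles of $H_i$ to be thin''). Nothing you set up delivers this: the lifted maps $\phi_{i,j}$ give length bounds in one direction only, and a finitely generated subgroup of a $\delta$-hyperbolic group can be arbitrarily distorted (or non-hyperbolic), so there is no control of the intrinsic geometry of $H_i$ in terms of $\delta_i$ unless one first proves that $H_i$ is undistorted in $G_i$ at the relevant scales --- which is the entire difficulty, not a bookkeeping step. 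So the finite-level argument, as written, is a plan with a hole where the main content should be.

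Your fallback route is essentially the right argument --- it amounts to re-deriving \cite[Theorem 3.18]{OOS}, which is exactly what the paper does: the paper shows a retract is \emph{isometrically} embedded (via the generating sets $Y=\rho(S)$, $X=S\cup Y$) and then quotes that theorem, that quasi-isometrically embedded subgroups of lacunary hyperbolic groups are lacunary hyperbolic. Your undistortion mechanism is the same as the paper's ($|h|_T\le|h|_S\le L|h|_T$ for $h\in H$, from the $1$-Lipschitz property of $\rho$), and the passage to bi-Lipschitz embedded asymptotic cones is fine. The flaw is the final step: ``a Gromov hyperbolic asymptotic cone of a group is an $\RR$-tree'' is invoked without proof, and it is false in general --- the central extensions constructed in \cite{OOS} have asymptotic cones that are circle-trees whose pieces are circles of uniformly bounded radius; such a cone is Gromov hyperbolic (tree-graded with uniformly bounded pieces) but contains simple loops, hence is not an $\RR$-tree. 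Fortunately the detour through hyperbolicity is unnecessary: a bi-Lipschitz embedding is a topological embedding, an $\RR$-tree contains no non-trivial simple loops, so neither does the cone of $H$; since that cone is geodesic, it is an $\RR$-tree by the very definition used in this paper, and $H$ is lacunary hyperbolic. With that replacement (and with the undistortion claim proved rather than asserted), your fallback becomes a correct proof.
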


\begin{proof}
We first show that a retract of a finitely generated group $G$ is isometrically embedded into $G$ for the appropriate choice of generating sets. This is well-known, but the argument is short and we provide it for the convenience of the reader. 

Let $G$ be a group generated by a finite set $S$, $H$ a retract of $G$, and $\rho\colon G\to H$ a retraction.  We define $Y=\rho (S)$ and $X=S\cup Y$. Clearly, we have $G=\langle X\rangle$, $H=\langle Y \rangle $, $\rho (X)=\rho(S)\cup \rho(Y)=Y\cup Y=Y$. Since $Y\subseteq X$, $|h|_X\le |h|_Y$ for all $h\in H$. Conversely, suppose that an element $h\in H$ can be written as a word $w$ of a certain length $n$ in generators $X\cup X^{-1}$. Applying $\rho$ to both sides of the equality $h=w$, we obtain that $h=\rho(h)$ can be written as a word of length $n$ in $\rho(X\cup X^{-1})=Y\cup Y^{-1}$. Thus, $|h|_Y\le |h|_X$. Combining this with the previously obtained inequality, we obtain that the group embedding $H\le G$ induces an isometric embedding of metric spaces $(H, \d_Y)\to (G, \d_X)$. 

To complete the proof of the lemma, it remains to refer to Theorem 3.18 in \cite{OOS}, which asserts that every quasi-isometrically embedded (in particular, every isometrically embedded) subgroup of a lacunary hyperbolic group is lacunary hyperbolic.
\end{proof}

For a group $G$ and a subgroup $N$, we denote by $C_G(N)$ the centralizer of $N$ in $G$. 
\begin{lem}\label{lem:no cyclic normal subgroups}
Suppose that $G$ is a torsion-free hyperbolic group, $H$ a non-cyclic subgroup of $G$, and $N$ a cyclic normal subgroup of $H$. Then $N=\{1\}$.
\end{lem}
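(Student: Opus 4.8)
The plan is to argue by contradiction: assume $N\neq\{1\}$ and deduce that $H$ must be cyclic. Since $G$ is torsion-free, the nontrivial cyclic group $N$ is infinite cyclic, so $N=\langle n\rangle$ with $n$ of infinite order. Viewing $G$ as acting on its Cayley graph $Cay(G,S)$ with respect to any finite generating set $S$ — a hyperbolic space — every infinite-order element of $G$ is loxodromic; in particular $n$ is loxodromic with fixed points $n^+,n^-\in\partial G$.

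Next I would use the normality of $N$ in $H$ together with $\operatorname{Aut}(\ZZ)\cong\ZZ/2\ZZ$. For each $h\in H$ we have $hnh^{-1}\in N$, and since conjugation by $h$ restricts to an automorphism of $N\cong\ZZ$, it follows that $hnh^{-1}=n^{\pm1}$. By Remark \ref{congugation}, the fixed-point set of $hnh^{-1}$ is $\{hn^+,hn^-\}$; on the other hand the fixed-point set of $n^{\pm1}$ is $\{n^+,n^-\}$. Comparing these, $\{hn^+,hn^-\}=\{n^+,n^-\}$, so every element of $H$ preserves the two-point set $\{n^+,n^-\}$. Consequently $H$ is contained in the setwise stabilizer $E=\operatorname{Stab}_G\{n^+,n^-\}$.

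The key structural input is the standard fact that in a hyperbolic group the stabilizer of the pair of endpoints of a loxodromic element — equivalently, the maximal elementary subgroup $E(n)$ containing $n$ — is virtually cyclic. (One can reach the same conclusion more algebraically by observing that the homomorphism $H\to\operatorname{Aut}(N)\cong\ZZ/2\ZZ$ has kernel $C_H(n)$ of index at most $2$, and that the centralizer $C_G(n)$ of an infinite-order element in a hyperbolic group is virtually cyclic.) Since $E$ is a subgroup of the torsion-free group $G$, it is torsion-free virtually cyclic, hence infinite cyclic by the fact recalled at the beginning of this section. Therefore $H\le E$ is a subgroup of an infinite cyclic group and is itself cyclic, contradicting the hypothesis that $H$ is non-cyclic. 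This forces $N=\{1\}$.

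I expect the only genuine obstacle to be citing or justifying the virtual cyclicity of $E(n)$ (equivalently, of the centralizer of a loxodromic element) in a hyperbolic group; the remaining steps — that $N\cong\ZZ$, that conjugation acts on the boundary fixed points through $\{\pm1\}$, and that torsion-free virtually cyclic groups are cyclic — are routine given the tools already assembled in this section.
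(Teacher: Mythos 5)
Your proof is correct, but its main line is genuinely different from the paper's. The paper argues purely algebraically: since any infinite cyclic subgroup of a hyperbolic group has finite index in its centralizer (\cite[Theorem 34, Chapter 8]{GdlH}), one gets $[C_G(N):N]<\infty$; the homomorphism $H\to \operatorname{Aut}(N)\cong \ZZ/2\ZZ$ has kernel $C_H(N)$ of index at most $2$, so $H$ is virtually cyclic, and this contradicts the fact that every non-elementary subgroup of a hyperbolic group contains a free subgroup of rank $2$. Your main route instead runs through boundary dynamics: $hnh^{-1}=n^{\pm1}$ forces $H$ to stabilize the pair $\{n^+,n^-\}$ setwise (via Remark \ref{congugation}), and you then invoke virtual cyclicity of the endpoint stabilizer $E=\operatorname{Stab}_G\{n^+,n^-\}$; the parenthetical ``more algebraic'' alternative you sketch is essentially the paper's proof verbatim. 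The trade-off is this: your dynamical route fits nicely with the Section 2 machinery, but it needs two standard facts the paper never states and would have to cite --- that infinite-order elements of a hyperbolic group are loxodromic on its Cayley graph, and that $E(n)$ is virtually cyclic --- the latter being a somewhat heavier input than the centralizer fact \cite[Theorem 34, Chapter 8]{GdlH} that the paper uses (indeed, the usual proof that $E(n)$ is virtually cyclic passes through that very centralizer statement plus an index-two argument of exactly the kind you and the paper both make). On the other hand, your conclusion is marginally cleaner: rather than invoking free subgroups of rank $2$, you finish with the fact, already recalled at the start of Section 3, that torsion-free virtually cyclic groups are cyclic, so $H\le E$ is cyclic, contradicting the hypothesis directly.
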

\begin{proof} 
Assume that $N$ is non-trivial. Since $G$ is torsion-free, $N$ must be infinite. Recall that any infinite cyclic subgroup of a hyperbolic group has finite index in its centralizer (see, for example, \cite[Theorem 34, Chapter 8]{GdlH}). Thus, $[C_G(N):N]<\infty$. Further, since $N\triangleleft H$, we have a homomorphism $H\to Aut(N)$ with kernel $C_H(N)$. Since $N$ is cyclic, $Aut(N)$ is of order $2$, and hence $[H:C_H(N)] \leq 2$. Summarizing, we obtain that $H$ is virtually cyclic. However, this contradicts the well-known property that every non-elementary subgroup of a hyperbolic group contains a free subgroup of rank $2$ \cite[Theorem 37, Chapter 8]{GdlH}.
\end{proof}

We are now ready to prove Theorem \ref{main1}.

\begin{proof}[Proof of Theorem \ref{main1}]
Let $Q$ be not a finite cyclic group with $\Phi(G)=\{1\}$ given by a presentation 
$$
Q=\langle X \mid R_1, R_2, \ldots \rangle,
$$
where $X=\{ x_1, \ldots, x_n\}$. If $Q$ is infinite and lacunary hyperbolic, we can simply take $L=Q$. In what follows, we assume that $Q$ is either finite or not lacunary hyperbolic. 

We denote by $F$ the free group with basis $X\cup \{a,b\}$ and let $X_0=\emptyset, X_1, X_2 \ldots $ be the list of all finite subsets of $F$. The proof of the theorem will consist of two steps. At the first step, we will inductively construct a sequence of groups and epimorphisms
\begin{equation}\label{eq:Gi}
F\xrightarrow{ \; \alpha_0 \; } G_0\xrightarrow{ \; \alpha_1 \; }  G_1 \xrightarrow{ \; \alpha_2 \; }  \ldots
\end{equation}
that satisfy conditions \ref{item: torsion free, non cyclic, hyperbolic}--\ref{item: a,b are non-generators} presented below for all $i\in \mathbb N\cup \{ 0\}$. For brevity, we denote by $\gamma_i\colon F\to G_i$ the composition $\alpha_0 \circ \cdots \circ \alpha _i$ and let $a_i=\gamma_i(a)$, $b_i=\gamma_i(b)$, $N_i=\langle a_i,b_i\rangle$, and $S_i =\gamma_i(X\cup \{ a, b\})$. 

\begin{enumerate}[(a)]
\item The group $G_i$ is torsion-free, non-cyclic, and $Cay(G_i, S_i)$ is $\delta_i$-hyperbolic for some $\delta_i\ge 0$. \label{item: torsion free, non cyclic, hyperbolic}
\item $a_i\ne 1$ and $b_i\ne 1$ in $G_i$. Moreover, if $i>0$, then $IR_{S_{i-1}}(\alpha_{i})> i\cdot \delta_{i-1}$. \label{item: inj radius is large}
\item The subgroup $N_i$ is normal in $G_i$ and the map sending $\gamma_i(x)N_i$ to $x$  for all $x\in X$ extends to an isomorphism of $G_i/N_i$ and the group $$Q_i=\langle X \mid R_1,R_2,\dots, R_{i}\rangle$$ (for convenience, we assume $Q_0$ to be the free group with basis $X$). \label{item: quotient canonically isom to Qi}

\item Suppose that $i>0$ and, for some $k\le i$, the natural image of $X_k$ in $G_{i-1}/N_{i-1}$ generates $G_{i-1}/N_{i-1}$ and $\langle \gamma_{i-1}(X_k)\rangle \cap N_{i-1} \neq \{1\}$; then $N_{i}\le  \langle \gamma_{i}(X_k)\rangle$.\label{item: a,b are non-generators}
\end{enumerate}

At the second step, we will show that the limit of the sequence (\ref{eq:Gi}) defined by 
\begin{equation}\label{L}
L= F/\bigcup _{i\in \mathbb{N}} \ker (\gamma_i)
\end{equation}
satisfies the requirements of the theorem. The reader may benefit from keeping in mind the following outline: conditions \ref{item: torsion free, non cyclic, hyperbolic} and \ref{item: inj radius is large} will ensure that $L$ is an infinite lacunary hyperbolic group; condition \ref{item: quotient canonically isom to Qi} implies that $L/ N \cong Q$, where $N$ is the image of $N_0$ in $L$; finally, condition \ref{item: a,b are non-generators} is used to show that $N=\Phi(L)$.  The attentive reader may notice that the inductive step will not make use of  \ref{item: a,b are non-generators};
this condition will only be used at the second step of the proof.

\medskip

\noindent\emph{Step 1: The inductive construction.} We now construct the sequence (\ref{eq:Gi}). Let $G_0$ be the non-cyclic, hyperbolic, torsion-free group obtained by applying Theorem \ref{Ols} to the group $F$, single subgroup $H=\langle a,b \rangle$, collection of elements 
$\{xax^{-1}, \; xbx^{-1}\mid x\in X\cup X^{-1}\big\}$, and  subset $M=Ball_{F,X\cup \{ a, b\}}(1)$. 
Thus, $G_0$ is given by the presentation 
$$
G_0= \big\langle X, a, b \mid xax^{-1}=u_{x},\; xbx^{-1}=v_x\;\; \forall \, x\in X\cup X^{-1}\big \rangle
$$
where $u_x$ and $v_x$ are some non-trivial words in the alphabet $\{ a^{\pm 1}, b^{\pm 1}\}$ depending on $x$. We also define $\alpha_0\colon F\to G_0$ to be the natural quotient map. The relations of $G_0$ guarantee that  $N_0\triangleleft G_0$ is normal and the map sending $\gamma_0(x)N_0$ to $x$  for all $x\in X$ extends to an isomorphism $G_0/N_0\cong Q_0$. Thus, conditions  \ref{item: torsion free, non cyclic, hyperbolic} and \ref{item: quotient canonically isom to Qi} hold. Our choice of $M$ guarantees that $a_0\ne 1$ and $b_0\ne 1$ in $G$. Note that the second part of the condition \ref{item: inj radius is large} and condition \ref{item: a,b are non-generators} is vacuously true for $i=0$.

Suppose now that groups $G_0,\dots,G_i$ and epimorphisms $\alpha_0,\dots,\alpha_i$ satisfying  \ref{item: torsion free, non cyclic, hyperbolic}--\ref{item: a,b are non-generators} have already been constructed for some $i\in \mathbb{N}\cup \{0\}$. Let $K$ be the set of all integers $0\leq k \leq i+1$ satisfying the following conditions.
\begin{enumerate}
    \item[($\ast$)] The natural image of $X_k$ in $G_i/N_i$ generates $G_i/N_i$.
    \item[($\ast\ast$)]  $\langle \gamma_i(X_k)\rangle  \cap N_i \neq \{1\}$.
\end{enumerate}
We note that for every $k\in K$, the subgroup $\langle \gamma_i(X_k)\rangle \cap N_i\leq G_i$ is non-cyclic. Indeed, by ($\ast$) and condition \ref{item: quotient canonically isom to Qi} of the inductive hypothesis, the subgroup $\langle \gamma_i(X_k)\rangle$ projects onto $Q_i$. Therefore, the subgroup $\langle \gamma_i(X_k)\rangle$ of the hyperbolic group $G_i$ is non-cyclic and its normal subgroup $\langle \gamma_i(X_k)\rangle\cap N_i$ must be non-cyclic by Lemma \ref{lem:no cyclic normal subgroups} and  ($\ast\ast$).

Let $m=|K|$ and let $H_1,\dots,H_m$ be an enumeration of subgroups in the set $\{ \langle \gamma_i(X_k)\rangle\cap N_i  \colon k\in K\}$. Note that all the subgroups $H_1,\dots,H_m$ are non-cyclic as explained above. Let also $H_{m+1}=N_i$. The first part of condition \ref{item: inj radius is large} guarantees that $H_{m+1}$ is non-trivial and, therefore, also non-cyclic by Lemma \ref{lem:no cyclic normal subgroups}. We apply Theorem \ref{Ols} to the torsion-free hyperbolic group $G_i$, the non-cyclic subgroups $H_1,\dots, H_{m+1}$, the elements 
$$
t_{11}=\cdots=t_{m1}=a_i,\;\;\; t_{12}=\cdots=t_{m2}=b_,\;\;\; t_{m+1,1}=\gamma_i(R_{i+1})
$$
(in the notation of Theorem \ref{Ols}, we have $n_1=\cdots=n_m=2$  and $n_{m+1}=1$ here), and the finite set $$M=Ball_{G_{i},S_{i}}((i+1)\cdot \delta_{i} +1).$$

Let 
\begin{equation}\label{Gquot}
G_{i+1}= G_i \Big/\bll a_ih_{11},\; b_ih_{12},\; \ldots ,\; a_ih_{m1},\; b_ih_{m2}, \;\gamma_i(R_{i+1})h_{m+1,1}\brr 
\end{equation}
be the resulting quotient group, where $h_{\ell 1}, h_{\ell 2}\in H_\ell$ ($\ell =1,\dots,m$), and $h_{m+1,1}\in H_{m+1}$. Since $G_{i+1}$ is non-cyclic, hyperbolic, and torsion-free, condition \ref{item: torsion free, non cyclic, hyperbolic} holds for $i+1$.  Further, let $\alpha_{i+1}\colon G_i\to G_{i+1}$ be the natural homomorphism. Since $\alpha_{i+1}$ is injective on $M$, we obtain \ref{item: inj radius is large} for $i+1$.

The subgroup $N_{i+1}$ is normal in $G_{i+1}$ being the image of the normal subgroup $N_i$ of $G_i$. Moreover, since $H_1,\dots,H_{m+1}\le N_i$, we have an isomorphism
$$
G_{i+1}/N_{i+1} \cong (G_i/N_i)\Big/\bll \gamma_i(R_{i+1})N_i\brr
$$
that identifies elements $\gamma_{i+1}(x)N_{i+1}$ in the left hand side with the cosets $(\gamma_i(x)N_i)\ll \gamma_i(R_{i+1})N_i\rr$ in the right hand side for all $x\in X$. Combining this with the isomorphism $G_i/N_i\cong Q_i$ given by condition \ref{item: quotient canonically isom to Qi} of the inductive hypothesis and the observation that the presentation of $Q_{i+1}$ is obtained from that of $Q_i$ by adding the relation $R_{i+1}=1$, we obtain an isomorphism $G_{i+1}/N_{i+1}\cong Q_{i+1}$ satisfying \ref{item: quotient canonically isom to Qi} for $i+1$.

Finally, let us verify condition \ref{item: a,b are non-generators}. Consider any $k\leq i+1$. If $k\notin K$, \ref{item: a,b are non-generators} vacuously holds for this $k$. If $k\in K$, there exists $\ell$ such that $H_\ell =\langle \gamma_i(X_k)\rangle \cap N_i$. By (\ref{Gquot}), we have $\alpha_{i+1}(a_i), \alpha_{i+1}(b_i)\in\alpha_{i+1}(H_\ell)$. Therefore, $N_{i+1}\leq \langle \gamma_{i+1}(X_k)\rangle$. Thus, \ref{item: a,b are non-generators} holds for $i+1$. This completes the inductive construction.

\medskip

\noindent\emph{Step 2: The limit group.} We now define the limit group $L$ by (\ref{L}). Condition \ref{item: torsion free, non cyclic, hyperbolic} of the inductive construction implies that $L$ is non-trivial and torsion-free (in particular, infinite). By conditions \ref{item: torsion free, non cyclic, hyperbolic}, \ref{item: inj radius is large}, and Theorem \ref{lac_hyp}, $L$ is lacunary hyperbolic.  

Let $\gamma\colon F\to L$ be the natural projection and let $N=\langle \gamma(a),\gamma(b)\rangle$. Condition \ref{item: quotient canonically isom to Qi} of the inductive construction implies that the map $\gamma(x)N\mapsto x$ for all $x\in X$ extends to an isomorphism $L/N\cong Q$. 

To conclude the proof of the theorem, it remains to show that $N=\Phi(L)$. The assumption that $\Phi(Q)=\{1\}$ implies that $\Phi(L) \leq N$ since the image of $\Phi(L)$ in $L/N$ is contained in $\Phi(L/N)\cong \Phi(Q)$. To prove the opposite inclusion $N\le \Phi(L)$, it suffices to show that the generators $\gamma (a)$ and $\gamma(b)$ of $N$ can be excluded from every finite generating set of $L$. 

To this end, let $C$ be a finite subset of $F$ such that $\gamma(C)$ generates $L$. By construction, $C\setminus \{a,b\}=X_k$ for some $k\in \mathbb{N}$. Since $\gamma(C)$ generates $L$, the image of $\gamma(X_k)$ in $L/N$ generates $L/N$. Since $F$ is finitely generated, it follows that the natural image of $\gamma_i(X_k)$ in $G_i/N_i$ generates $G_i/N_i$ for all sufficiently large $i\in \NN$.

Suppose that $\langle \gamma(X_k)\rangle \cap N= \{1\}$. Then $\langle \gamma(X_k)\rangle\cong L/N\cong Q$, and so $Q$ is a retract of $L$. Hence, $Q$ is lacunary hyperbolic by Lemma \ref{lem:retr}. Since $Q$ is torsion-free, it must be infinite, which contradicts our assumptions on $Q$. Thus, $\langle \gamma(X_k)\rangle \cap N\neq \{1\}$. It follows that $\langle \gamma_i(X_k)\rangle \cap N_i\neq \{1\}$ for all sufficiently large $i\in \NN$. 

Let $i\in \mathbb{N}$ be large enough so that $i\ge k$,  the image of $\gamma_i(X_k)$ in $G_i/N_i$ generates $G_i/N_i$, and $\langle \gamma_i(X_k)\rangle \cap N_i\neq \{1\}$. By condition \ref{item: a,b are non-generators}, $N_{i+1}\leq \langle \gamma_{i+1}(X_k)\rangle$. In particular, $\gamma_{i+1}(X_k)$ generates $G_{i+1}$ and therefore $\gamma(X_k)$ generates $L$. It follows that $\gamma(C\setminus \{a,b\})$ generates $L$, thus concluding the proof.
\end{proof}

\section{Open questions}

We conclude with a brief discussion of some open problems prompted by our results.

\begin{prob}
Let $S$ be a hyperbolic space and let $G \leq Isom(S)$ be a general type subgroup.
\begin{enumerate}
    \item[(a)] Are the orbits of $\Phi(G)$ bounded?
    \item[(b)] Does $G$ contain a maximal subgroup?
\end{enumerate} 
\end{prob}

Clearly, an affirmative answer to (a) implies an affirmative answer to (b). Even when $S$ is a tree, both questions remain open (for a detailed discussion of this particular case, we refer to \cite{All} and references therein). A version of the second question for amalgamated free products can also be found in \cite[Problem 17.114]{Kou}.  Theorem \ref{main0} answers the first question in the affirmative if $G$ is countable. However, the use of countability is indispensable in our proof.

A straightforward attempt to provide negative answers to both questions involves considering the full automorphism group $Aut(T_n)$ of an $n$-regular tree $T_n$ for some $n\ge 3$. In this case, the completion $\widehat T_n$ is compact and there is ``no room" in $\widehat T$ to extend our proof of Proposition \ref{main_prop} to uncountable subsets of $Aut(T_n)$ through transfinite induction. However, the conclusion of Theorem \ref{main0} remains true in this case. Indeed, the action of $Aut(T_n)$ on $\partial T_n$ is easily seen to be $3$-transitive. This implies that the stabilizer $Stab_G(x)$ of every point $x\in \partial T_n$ acts $2$-transitively and, in particular, primitively on $\partial T_n$; the latter condition is equivalent to the maximality of $Stab_G(x)$ in $G$. Thus, we obtain 
$$
\Phi(Aut(T_n))\le \bigcap\limits_{x\in \partial T_n} Stab_G(x)=\{ 1\}.
$$

Further, we do not know the answer to the following, even in the particular case of countable groups.

\begin{prob}
    Does there exist an invariably generated group $G$ that admits a general type action on a hyperbolic space? Can such a group be finitely invariably generated?
\end{prob}

\vspace{5mm}

\noindent \textbf{Gil Goffer: }Department of Mathematics, University of California, San Diego 92093, U.S.A.
 
\noindent E-mail: \emph{ggoffer@ucsd.edu}

\bigskip

\noindent \textbf{Denis Osin: }Department of Mathematics, Vanderbilt University, Nashville 37240, U.S.A.

\noindent E-mail: \emph{denis.v.osin@vanderbilt.edu}

\bigskip

\noindent \textbf{Ekaterina Rybak: }Department of Mathematics, Vanderbilt University, Nashville 37240, U.S.A.

\noindent E-mail: \emph{ekaterina.rybak@vanderbilt.edu}

\end{document}